\newtheorem{defin}{Definition}
\newtheorem{thm}{Theorem}
\newtheorem{prop}{Proposition}
\newtheorem{lemma}{Lemma}
\newtheorem{corol}{Corollary}
\newtheorem{conj}{Conjecture}
\DeclareMathOperator\BSC{\mathrm{BSC}}
\DeclareMathOperator\BEC{\mathrm{BEC}}
\DeclareMathOperator\arctanh{arctanh}
\DeclareMathOperator\Unif{\mathrm{Unif}}
\newcommand{\bP}{\mathbb{P}}
\newcommand{\bE}{\mathbb{E}}
\newcommand{\mR}{\mathbb{R}}
\DeclareMathOperator{\Var}{Var}
\newcommand{\Y}{\mathcal{Y}}
\newcommand{\Z}{\mathcal{Z}}
\newcommand{\Po}{\mathrm{Poi}}
\begin{document}

\title{Stochastic block model entropy and broadcasting on trees with survey}

\author{Emmanuel Abbe \thanks{Institute of Mathematics, EPFL, Lausanne, CH-1015, Switzerland. Email: emmanuel.abbe@epfl.ch.} \and
Elisabetta Cornacchia \thanks{Institute of Mathematics, EPFL, Lausanne, CH-1015, Switzerland. Email: elisabetta.cornacchia@epfl.ch.} \and
Yuzhou Gu \thanks{IDSS, LIDS, and Dept. of EECS, MIT, Cambridge, MA, 02139, USA. Email: yuzhougu@mit.edu.} \and
Yury Polyanskiy \thanks{IDSS, LIDS, and Dept. of EECS, MIT, Cambridge, MA, 02139, USA. Email: yp@mit.edu.}}
\date{}

\maketitle


\begin{abstract}
The limit of the entropy in the stochastic block model (SBM) has been characterized in the sparse regime for the special case of disassortative communities  \cite{10.1145/3055399.3055420} and for the classical case of assortative communities but in the dense regime \cite{DAM15}. The problem has not been closed in the classical  sparse and assortative case. This paper establishes the result in this case for any SNR besides for the interval $(1,3.513)$. It further gives an approximation to the limit in this window.

The result is obtained by expressing the global SBM entropy as an integral of local tree entropies in a broadcasting on tree model with erasure side-information. The main technical advancement then relies on showing the irrelevance of the boundary in such a model, also studied with variants in~\cite{kanade2014global}, \cite{Mossel_2016} and~\cite{mossel2015local}. In particular, we establish the uniqueness of the BP fixed point in the survey model for any SNR above 3.513 or below 1. This only leaves a narrow region in the plane between SNR and survey strength where the uniqueness of BP  conjectured in these papers remains unproved. 
\end{abstract}



\section{Introduction}

Over the last decade, several works have established a precise picture for the statistical and algorithmic behavior of the stochastic block model (see an account in \cite{abbe2017community}). In particular, the questions of weak and exact recovery, i.e., whether it is possible (or not) to recover the communities in the extremal cases of weak and exact accuracy, have been fully closed in the two-community symmetric SBM by establishing sharp threshold phenomena in terms of appropriate signal-to-noise (SNR) ratios \cite{10.1145/2591796.2591857, MNSplanted15,MNS14,abh}. 
Yet, despite significant  progress, the more nuanced question of proving how much information or agreement can be recovered about the communities at any given value of the SNR has remained open even in this simplest case.

More specifically, for two symmetric communities and in the sparse regime, the expression of the limiting entropy of the
SBM is characterized\footnote{Characterizing the limit does not mean obtaining an explicit expression; it refers to an
implicit $n$-independent expression relying on integrals and fixed point equations for the quantities of interest in all
of the papers discussed here.} at all SNR for the special case of disassortative communities (i.e., communities that
connect more outside than inside) \cite{10.1145/3055399.3055420}. The problem for assortative communities is closed but
in the denser regimes, where the vertex degrees diverge while maintaining a finite SNR \cite{DAM15}. However, for the
classical case of assortative communities and in the sparse regime, a complete characterization remains open, despite
significant progress \cite{kanade2014global,Mossel_2016,mossel2015local}. The expression of the optimal agreement
(rather than the entropy) is known in this case for SNR large enough, and is related to the
problem of robust reconstruction on a tree \cite{Mossel_2016}. This result is conjectured to hold all the way down to
the optimal threshold of 1, i.e., the threshold until which the communities can be weakly recovered. We make progress on
this question by establishing the result down to 3.513. Further, we establish new results and improvements of prior
results for the problem of broadcasting on a tree with side-information; see Section \ref{sec:related}.

\paragraph{The SBM entropy.} Recall that in the symmetric SBM with two communities, a random variable $X$ is drawn uniformly at random in $\{\pm 1\}^n$ and an $n$-vertex graph $G$ is drawn by connecting vertices having same (resp.\ different) values in $X$ with probability $a/n$ (resp.\ $b/n$).

The SBM mutual information is defined by the limit (if it exists)
\begin{align}
\mathcal{I}(a,b) :=\lim_{n \to \infty} \frac{1}{n} I(X;G),
\end{align}
where $I$ is the mutual information.
Note that establishing the existence of this limit is nontrivial. This was proved in \cite{abbetoc} for the case of $a<b$, the same case for which the value of the limit has more recently been established \cite{10.1145/3055399.3055420}.
Note also that due to the chain rule $I(X;G)=H(X)- H(X|G)$, the SBM mutual information is the complement of the SBM conditional entropy (called simply the SBM entropy)
\begin{align}
\mathcal{H}(a,b) := \lim_{n \to \infty} \frac{1}{n} H(X|G).
\end{align}
Informally, the SBM mutual information measures how much information can be recovered about the communities after observing the graph, and equivalently, the SBM entropy measures how much uncertainty is left about the communities after observing the graph. More formally, it quantifies the average number of bits needed to represent the communities after observing the graph; see \cite{7852203} for formal relations to graph compression. 

Note that one may use other measures on the communities signal given the graph, such as the optimal (normalized) mean square error of reconstructing the $n \times n$ rank-2 block matrix (with $a/n$ in the $n/2 \times n/2$ diagonal blocks and $b/n$ in the off diagonal blocks), or the optimal (normalized) agreement (Hamming distance) of reconstructing $X$ up to a community relabelling. These can be explicitly related to each other in the tree models discussed next, and require bounds in the SBM context; see for instance \cite{DAM15}. The conditional entropy allows however for a direct reduction from the SBM to the tree model with side-information, as discussed below.

\paragraph{The BOTS entropy.} Consider the following problem of broadcasting on a tree with side-information (BOTS).
This will be later defined on general trees and with general side-information, but consider for simplicity the case of regular
trees (where each vertex has exactly $d$ descendants) and erasure side-information. In this model, a random bit is
attached to the root of the tree and broadcasted down the tree by flipping its value independently with probability
$\delta$ on each edge (for convenience we call $\theta = 1-2\delta$). We denote by $\sigma_\rho$ the root-bit, by $\sigma_{L_k}$ the $d^k$-dimensional vector of the
leaf-bits at generation $k$, and by $\omega_{T_k}^\epsilon$ the side-information up to depth $k$: these are the vertices
labelled that are revealed in the tree (besides the root) independently with probability $1-\epsilon$. We call this side-information the ``survey''. Note that this is the type of side-information used in our connection between BOTS and SBM entropies, but other types of side-information are of independent interest. In this paper we devote attention to general (but symmetric with respect to the spin flip) observation model of the nodes, which we refer to as the BMS channel $W$, see Appendix~\ref{apx:bms} for the explanation of this notion.

We are now interested in two quantities:
\begin{enumerate}
\item the limiting entropy of the root-bit after observing the leaf-bits and the survey, i.e.\footnote{Note that in these tree models, the limits can be proved to always exist.}, $$\bar{h}(d,\theta,\epsilon):=\lim_{k \to \infty} H(\sigma_\rho|\sigma_{L_k}, \omega_{T_k}^\epsilon),$$
\item the same quantity without the leaf bits being observed, i.e., $$h(d,\theta,\epsilon):=\lim_{k \to \infty} H(\sigma_\rho |\omega_{T_k}^\epsilon).$$
\end{enumerate}

We now give a rather direct method to express the SBM entropy in terms of BOTS entropies. 

\paragraph{SBM to BOTS entropy reduction.}
The relation obtained between the SBM and BOTS conditional entropy is as follows: if for some range of parameters $d,\theta$, we can establish that $$h=\bar{h}, \quad \forall \epsilon \in (0,1),$$
i.e., if the {\it boundary is irrelevant}, 
then we can characterize $\mathcal{H}$ as an integral of $\bar h$ using the parameter correspondence $d=(a+b)/2$ and $\theta = \frac{a-b}{a+b}$ (see Theorem \ref{thm:HSBM-BOT}). 

Our starting point to such a reduction is an area-theorem or interpolation trick that is commonly used in coding theory \cite{RU01} and related statistical physics literature \cite{MMBook}.

The idea is to express the entropy in the SBM $H(X|G)$ as the integral 
\begin{align}
    \frac{1}{n} H(X|G)= \int_0^1  \frac{1}{n} \frac{\partial}{\partial \epsilon} H(X|G, Y^\epsilon) d \epsilon,
\end{align}
where, similarly as before, $Y^\epsilon$ is an erasure survey that reveals the community of each vertex in $X$ independently with probability $1-\epsilon$. 
We then use the fact that $\frac{1}{n}\frac{\partial}{\partial \epsilon} H(X|G, Y_\epsilon) = H(X_1 | G, Y^\epsilon_{\sim 1} )$,
where $1$ is an arbitrary vertex in the graph and $Y^\epsilon_{\sim 1}$ denotes the erasure survey on all vertices excluding vertex $1$. Since conditioning reduces entropy, one can  upper-bound $H(X_1 | G, Y^\epsilon_{\sim 1} )$ by considering only the information in the vertex $1$ neighborhood, 
and due to the local tree-like topology of SBMs, this gives an upper-bound with the BOTS entropy without leaf-information. Moreover, one can add the leaf-information in the conditioning to cut-off the  graph beyond a local neighborhood, using the Markovianity\footnote{Strict Markovanity does not hold in the SBM due to the weak effect of non-edges, and this requires a technical lemma; see proof of Theorem \ref{thm:HSBM-BOT}. This technicality can also be avoided by considering the related Censored Block Model (CBM), rather than the SBM, for which strict Markovianity holds.} of the model,  obtaining as well a lower-bound from the BOTS entropy but this time with the leaf-information, cf.~\eqref{eq:boundsentropy}.    

Different kind of reductions from SBMs to tree models have long been known and leveraged in the SBM in
\cite{10.1145/3055399.3055420,Mossel_2016,alaoui2019computational}; we refer to Section \ref{sec:related} for further discussions on these. 

We now turn to the crux of the analysis, i.e., the establishment of $h=\bar{h}$. 

\paragraph{Uniqueness of BP fixed point for BOTS.} 

Our main contribution is to show that in a wide range of parameters and side-information models, the BOTS associated distributional fixed
point equation (known as BP fixed point) has a unique solution. This automatically has several implications.

First, this establishes the desired  `boundary irrelevance' property for the BEC survey, i.e., $h=\bar{h}$:
\begin{equation}\label{eq:bi_1}
\lim_{k \to \infty} H(\sigma_\rho|\sigma_{L_k}, \omega_{T_k}^\epsilon) = \lim_{k \to \infty} H(\sigma_\rho |\omega_{T_k}^\epsilon).
\end{equation}
The latter always implies 
\begin{equation}\label{eq:bi_2}
\lim_{\epsilon \to 1} \lim_{k \to \infty} H(\sigma_\rho |\omega_{T_k}^\epsilon)
=\lim_{k \to \infty} H(\sigma_\rho|\sigma_{L_k}) \,.
\end{equation}
Indeed, one only needs to notice that $\lim_{\epsilon\to 1} \lim_{k \to \infty} H(\sigma_\rho|\sigma_{L_k}, \omega_{T_k}^\epsilon) =\sup_{\epsilon,k} H(\sigma_\rho|\sigma_{L_k}, \omega_{T_k}^\epsilon)
$
and that for every $k$ the latter quantity is continuous in
$\epsilon \in [0,1]$ including at the boundary. 

Further, the presence of the survey allows to convert the absence of leaf-information into the presence of noisy leaf-information, thereby obtaining the robust reconstruction property in the presence and in the absence of the survey \cite{Mossel_2016}.

Property~\eqref{eq:bi_2} is also known in the SBM literature as the condition for ``optimality
of local algorithms'', and was investigated in~\cite{kanade2014global,mossel2015local}. These works build on
the crucial contribution of~\cite{Mossel_2016}, which shows uniqueness of BP fixed point for BOT without survey and
$d\theta^2 > C$, where $C$ is ``large enough'' (see Appendix \ref{apx:mns_const} for our estimates of how large).
Note that since the conditional entropy in~\eqref{eq:bi_2} can be sandwiched between $H(\sigma_\rho| \sigma_{L_k})$ and $H(\sigma_\rho|\omega_{L_k}^\epsilon)$, the result of~\cite{Mossel_2016} implies~\eqref{eq:bi_2}, as indeed observed in~\cite[Prop.
3]{kanade2014global}. However, \cite{kanade2014global}  derives result for the case where $\epsilon\to 1$, relying on \cite{Mossel_2016} for large enough $C$. It also conjectures the
more general~\eqref{eq:bi_1} (for all $d,\theta,\epsilon$ and BEC survey), and our paper validates this conjecture in a wide range of parameters (see Fig.~\ref{fig:dt2}), including for all values of the $d\theta^2 \not \in (1,3.513)$.

Finally,  subsequent work~\cite{mossel2015local}
focuses on the case of $\BSC_\epsilon$ rather than $\BEC_\epsilon$ survey, and also conjectured~\eqref{eq:bi_1} for all $d,\theta,\epsilon$. They 
demonstrate the uniqueness of the BP fixed point in this setting for some range of parameters (which as
$\epsilon \to 1/2$ reduces to $d\theta^2 > C$ for some large enough $C$). Although the method of~\cite{mossel2015local} is an
extension of~\cite{Mossel_2016}, the authors make the  remark ``We note however that the paper~\cite{Mossel_2016} did not consider
side information and the adaptation of the proof is far from trivial.'' This is further expanded in the current paper.

\subsection{Novelty and comparison to the literature}\label{sec:related}
We believe that our proof technique offers the following improvements compared to~\cite{Mossel_2016,mossel2015local}: 
(a) it is much shorter; (b) we do not need to consider large $\theta$, small $d$  and
small $d$ large $\theta$ cases separately; (c) it works simultaneously for $d\theta^2 < 1$ and $d\theta^2 > 3.513$; (d) it
works simultaneously with and without side-information, and the side-information can be any BMS, rather than specifically the BEC or BSC; 
(e) it closes the entire low-SNR case $d\theta^2 <
1$\footnote{There are, however, two related low-SNR results. \cite[Theorem 4.2]{mossel2015local} shows uniqueness of
fixed point for $d\theta<1$ via  a simple contractivity of $F_\theta$
function in the BP recursion~\eqref{eqn:llr-recursion}.\cite[Theorem 3]{kanade2014global} shows~\eqref{eq:bi_2} for
$d\theta^2<1$ as an application of information contraction from~\cite{evans2000}.}, and to the best of our knowledge it
yields the state-of-the-art threshold for the high-SNR case. 

Our main innovation is the information-theoretic point of view: we consider BOTS with or without leaf
observations as two binary input symmetric channels (BMSs) which are related to each other by a property known as 
degradation. This implies a certain inequality between the log-likelihood ratios (LLRs), cf.~\eqref{eq:deg_1}, which we
exploit in the application of the potential method. These key ideas are the content of the
Prop.~\ref{prop:reduce-to-contraction}. On
the more technical side, another innovation is the choice of the potential function as $\phi(r) = e^{-1/2 r}$.

%
%


Concerning the reduction from SBM to BOTs, 
we note first that the reduction in \cite{Mossel_2016} is obtained for the agreement metric. It is easy to navigate between agreement and entropy once on the tree models, but in the SBM, the entropy allows for the chain rule and other properties that lead to the direct reduction detailed previously. On the other hand, \cite{Mossel_2016}, rely on a black-box algorithms that solves weak recovery in order to bring the noisy leaves. Therefore, we are trading the noisy leaves with the survey. In turn, we can exploit the survey to obtain tighter conditions for the boundary irrelevance that lead to part (ii) of Theorem \ref{thm:HSBM-BOT}.

Finally, \cite{10.1145/3055399.3055420} uses a reduction to trees for the entropy that does also not rely on the erasure side-information as described above. In particular, the computation of the SBM entropy is linked to an optimization problem (Theorem 2.2 therein), whose solution corresponds to the dominant BP fixed point on a Galton-Watson tree (Theorem 2.4). 



\section{Results: Boundary Irrelevance and SBM Entropy} \label{sec:bi-and-sbm}

\paragraph{Broadcasting on Trees with Survey (BOTS).}
We start with the standard broadcasting on trees (BOT) setting.
Let $T$ be an infinite tree rooted at $\rho$.
Let $\sigma_\rho \sim \Unif(\{\pm 1\})$ be the root bit and assume that it is broadcast through each edge independently with flip probability $\delta \in (0, \frac 12]$.
For simplicity we use notation $\theta = 1-2\delta$.
Let $L_k$ denote the set of nodes at level $k$, and $T_k$ denote the set of nodes at level $\leq k$ (where the root is at level 0).
Reconstruction on such models consists of recovering the root bit after observing the leaves bits at large depth (\cite{evans2000}).

We consider a slightly different problem, where we have access to some node side-information, or ``survey''.
Specifically, let $W$ be a fixed BMS channel, and for each node $u$ we observe $\omega_u \sim W(\sigma_u)$. We call $(T,\rho,\theta, W)$ a broadcasting instance with survey. We will also denote by $\Delta_W$ the $\Delta$-component of the BMS $W$ (see Appendix \ref{apx:bms} for background on BMS channels).
This setting includes the one in \cite{mossel2015local}, where $W = \BSC_\alpha$, i.e., for each node $u$, $\bP[\omega_u= \sigma_u] = 1 - \bP[\omega_u= -\sigma_u] = 1-\alpha$; and the one in \cite{kanade2014global}, where $W = \BEC_\epsilon$, i.e., for each node the survey reveals the correct label with probability $1-\epsilon$ and an erasure symbol otherwise.
The latter is of particular interest to us because of its application to the computation of the SBM entropy (Theorem \ref{thm:HSBM-BOT}). For clarity, in the case of erasure survey, we denote $\omega_{u}^\epsilon = \BEC_\epsilon(\sigma_u)$.  

\begin{thm} \label{thm:HSBM-BOT}
Let $(X,G) \sim SBM(n,2,a/n,b/n)$. Let $T$ be a Galton-Watson tree with Pois$(\frac{a+b}{2})$ offspring distribution and let $(T,\rho, \frac{a-b}{a+b}, \BEC_\epsilon)$ be a broadcasting instance with erasure survey, and edge flip probability $\frac{b}{a+b}$. Let $\alpha^* \approx 3.513$ be the unique solution in $\mR_{>1}$ to the equation 
      $\exp(-\frac{\alpha-1}2) \alpha = 1$.
The following hold.
\begin{enumerate}[(i)]
    \item For $a,b$ such that $\frac{(a-b)^2}{2(a+b)}\leq 1$ or $\frac{(a-b)^2}{2(a+b)}\geq \alpha^* \approx 3.513$
\begin{align} \label{eq:HSBM}
   \mathcal{H}(a,b) = \lim_{n \rightarrow \infty} \frac{1}{n}H(X|G) = \int_0^1 \lim_{k \to \infty} H(\sigma_\rho|T, \sigma_{L_k}, \omega^\epsilon_{L_k}) d\epsilon.
\end{align}
    \item For any $a,b$ such that $\frac{(a-b)^2}{2(a+b)}\in (1,\alpha^*)$, i.e., inside the gap of part (i), 
    \begin{align} 
  & \lim\inf_{n \rightarrow \infty} \frac{1}{n}H(X|G) =  \int_0^1 \lim_{k \to \infty} H(\sigma_\rho|T, \sigma_{L_k}, \omega^\epsilon_{L_k}) d\epsilon + \xi_{\text{inf}}, \\ 
& \lim\sup_{n \rightarrow \infty} \frac{1}{n}H(X|G) =  \int_0^1 \lim_{k \to \infty} H(\sigma_\rho|T, \sigma_{L_k}, \omega^\epsilon_{L_k}) d\epsilon + \xi_{\text{sup}},
\end{align}
where $0 \leq\xi_{\text{inf}},  \xi_{\text{sup}}\leq 1- \frac{\sqrt{e}}{2} \approx 0.178 $.
    \end{enumerate}
\end{thm}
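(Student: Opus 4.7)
The plan is to use an area-theorem interpolation in a BEC survey parameter and sandwich the resulting single-vertex entropy between two BOTS quantities. First, I equip the SBM with an independent erasure survey $Y^\epsilon$ of strength $\epsilon$ on the labels $X$ and write
\begin{align}
\frac{1}{n} H(X|G) = \int_0^1 \frac{1}{n}\frac{d}{d\epsilon} H(X | G, Y^\epsilon)\, d\epsilon.
\end{align}
Vertex-permutation symmetry of the SBM together with the standard derivative identity for the BEC reduces the integrand to $H(X_1 | G, Y^\epsilon_{\sim 1})$ for an arbitrary vertex $1$. The problem is now local: I would couple the $k$-neighborhood of vertex $1$ in $G$ to a $\Po((a+b)/2)$ Galton--Watson tree $T$ with edge-flip probability $b/(a+b)$, which gives tree parameters $d = (a+b)/2$ and $\theta = (a-b)/(a+b)$, so that $d\theta^2 = (a-b)^2/(2(a+b))$ matches the statement.

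Using this coupling I would derive the sandwich
\begin{align} \label{eq:plan-sandwich}
 H(\sigma_\rho | T, \sigma_{L_k}, \omega^\epsilon_{L_k}) - o_{k,n}(1) \;\leq\; H(X_1 | G, Y^\epsilon_{\sim 1}) \;\leq\; H(\sigma_\rho | T, \omega^\epsilon_{T_k \setminus \{\rho\}}) + o_{k,n}(1).
\end{align}
The upper bound follows by discarding all observations outside the $k$-neighborhood of vertex $1$ (conditioning reduces entropy). The lower bound follows by inserting the true spins at depth $k$ into the conditioning, which approximately $d$-separates vertex $1$ from the remainder of the graph. For part (i), the hypothesis $d\theta^2 \notin (1,\alpha^*)$ activates the uniqueness-of-BP-fixed-point result established elsewhere in the paper, yielding the boundary-irrelevance identity $\lim_k H(\sigma_\rho | T, \omega^\epsilon_{T_k \setminus \{\rho\}}) = \lim_k H(\sigma_\rho | T, \sigma_{L_k}, \omega^\epsilon_{L_k})$ for every $\epsilon \in (0,1)$. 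The two ends of \eqref{eq:plan-sandwich} therefore coincide in the iterated limit $n\to\infty$ followed by $k\to\infty$, and dominated convergence in $\epsilon$ produces \eqref{eq:HSBM}.

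For part (ii), boundary irrelevance fails inside the gap, so the two limits in \eqref{eq:plan-sandwich} may genuinely differ; the residuals $\xi_{\text{inf}}, \xi_{\text{sup}}$ are then simply the liminf and limsup of the $\epsilon$-integrated gap between the two BOTS limits, and both are automatically non-negative. To obtain the uniform upper bound $1 - \sqrt{e}/2 \approx 0.178$, I would exploit the potential function $\phi(r) = e^{-r/2}$ central to the contraction analysis, together with the defining identity $\exp(-(\alpha^*-1)/2)\alpha^* = 1$ of the critical $\alpha^*$, to bound the gap between the two BOTS entropies pointwise in $\epsilon$; integration in $\epsilon$ then gives the claim.

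The main obstacle is the failure of strict Markovianity in the SBM noted in the paper: non-edges between vertex $1$ and far-away vertices carry a small amount of information about $X_1$, so revealing the depth-$k$ spins in the tree coupling does not exactly $d$-separate vertex $1$ from the rest of the graph, giving the $o_{k,n}(1)$ terms in \eqref{eq:plan-sandwich}. Controlling these uniformly while sending $n\to\infty$ before $k\to\infty$ is the key technical subtlety; a clean alternative I would present in parallel is to run the same argument on the Censored Block Model, where non-observed edges are genuinely non-informative and the reduction is exact, and then transfer to the SBM via a contiguity argument.
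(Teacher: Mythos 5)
Your plan reproduces the paper's proof for part (i) essentially verbatim: the area-theorem interpolation in the erasure parameter, the identity $f'(\epsilon)=nH(X_1\mid G,Y^\epsilon_{\sim 1})$, the local coupling to a Poisson Galton--Watson tree with $d=(a+b)/2$ and $\theta=(a-b)/(a+b)$, the two-sided entropy sandwich (the upper bound by dropping information, the lower bound by inserting the depth-$k$ spins and a Markovianity-up-to-$o(1)$ lemma for the SBM non-edges), and closing the sandwich via the boundary-irrelevance / uniqueness-of-BP-fixed-point theorem when $d\theta^2\notin(1,\alpha^*)$. Your CBM remark also matches a footnote in the paper.

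For part (ii), however, your description of the mechanism is off. You say you would ``exploit the potential function $\phi(r)=e^{-r/2}$ together with the defining identity $\exp(-(\alpha^*-1)/2)\alpha^*=1$ of $\alpha^*$ to bound the gap pointwise in $\epsilon$.'' The identity defining $\alpha^*$ is not what controls this step, and there is no nontrivial pointwise gap bound for large $\epsilon$. What actually produces $1-\sqrt e/2$ is the observation that for BEC survey $Z(\BEC_\epsilon)=\epsilon$ and the contraction criterion $d\theta^2 e^{-(d\theta^2-1)_+/2}\,Z(W)<1$ holds for \emph{every} $d,\theta$ once $\epsilon<\sqrt e/2$, because $\sup_{\alpha\ge 0}\alpha e^{-(\alpha-1)/2}=2/\sqrt e$ (this is Corollary~\ref{cor:BI-conseq}(iii), proved from part (i) of the corollary, not from the fixed-point equation for $\alpha^*$). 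Consequently the integrand gap vanishes identically on $[0,\sqrt e/2)$, while on the remaining interval $[\sqrt e/2,1)$ it is bounded only by the trivial $\lim_k I(X_1;X_{L_k}\mid T_k)\le H(X_1)=1$ bit; integrating the piecewise bound gives $\xi\le 1-\sqrt e/2$. The structure is ``zero on a prefix of the $\epsilon$-interval, trivially one bit on the suffix,'' rather than a quantitative pointwise decay, and your sketch should make this explicit. A small additional point: in your sandwich's lower bound you condition only on $\omega^\epsilon_{L_k}$, but the Markov-field argument and the BI property are stated with the survey on the whole ball $T_k\setminus\{\rho\}$; the two sides of the sandwich should carry matching survey sets so that BI closes the gap.
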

A crucial ingredient to establish Theorem~\ref{thm:HSBM-BOT} is the following property for BOTS. 

\begin{defin}[Boundary Irrelevance (BI)]
We say that $(T,\rho,\theta, W)$ has the Boundary Irrelevance (BI) property if 
\begin{align} \label{eq:BI}
    \lim_{k\to\infty} I(\sigma_\rho; \sigma_{L_k} |T, \omega_{T_k}) = 0.
\end{align}
which is equivalent to~\eqref{eq:bi_1}.
\end{defin}
In words, the (BI) implies that if we have access to some intermediate node information, the leaves at infinite depth become irrelevant for detecting the root bit.
We focus on regular and Galton-Watson trees with Poisson offspring. We prove the following Theorem in Section \ref{sec:recon-on-tree}.

\begin{thm} \label{thm:BI-allW}
Let $T$ be a $d$-regular tree or a Galton-Watson tree with Poisson$(d)$ offspring distribution, with root vertex $\rho$. Let $W$ be a BMS channel.
If $P_e(W) \ne \frac 12$, and
    \begin{align}
    d \theta^2 \exp(-\frac{(d\theta^2-1)_+}2) Z(W) < 1,\label{eqn:high-snr-tight}
    \end{align}
    where $P_e(W)$ is the probability of error, and $Z(W)$ is the Bhattacharyya coefficient (defined in Definition \ref{defn:information-measures}),
    then (BI) holds for $(T, \rho, \theta, W)$.
In particular, (BI) holds for any $(T, \rho, \theta, W)$ with $d\theta^2 < 1$ or $d\theta^2 > \alpha^*$ (and with $P_e(W) \neq \frac 12$), where $\alpha^* \approx 3.513$ is the unique solution in $\mR_{>1}$ to the equation 
      $\exp(-\frac{\alpha-1}2) \alpha = 1$.
\end{thm}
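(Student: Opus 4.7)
The plan is to apply Proposition \ref{prop:reduce-to-contraction} to reduce (BI) to a one-step contraction of a Bhattacharyya-type potential along the BP recursion, and then to exhibit the contraction factor $d\theta^2 \exp(-(d\theta^2-1)_+/2) Z(W)$.

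First I would set up two coupled LLR processes. Let $\mu_k$ be the conditional distribution of the LLR of $\sigma_\rho$ given $(T_k, \omega_{T_k}, \sigma_{L_k})$ under $\sigma_\rho = +1$, and $\nu_k$ the analogous distribution given $(T_k, \omega_{T_k})$ alone. Both evolve under the BP recursion
\[
L^{(k+1)} \stackrel{d}{=} L_W + \sum_{i=1}^{D} F_\theta(L_i^{(k)}), \qquad F_\theta(x) := 2\arctanh(\theta \tanh(x/2)),
\]
with $D = d$ (regular case) or $D \sim \mathrm{Poisson}(d)$ (Galton--Watson case), $L_W$ the LLR of the root's own survey, and the $L_i^{(k)}$ iid copies. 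Only the initial conditions differ ($\mu_0$ encodes exact leaf information, $\nu_0$ none), and since extra observations never decrease information, $\nu_k$ is stochastically degraded from $\mu_k$ for every $k$. By Proposition \ref{prop:reduce-to-contraction}, (BI) is equivalent to the Bhattacharyya gap
\[
\Delta_k := Z(\nu_k) - Z(\mu_k), \qquad Z(\pi) := \E_{L \sim \pi}[\phi(L)\,|\,+1], \quad \phi(r) := e^{-r/2},
\]
tending to $0$ as $k \to \infty$; degradation guarantees $\Delta_k \geq 0$.

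The core of the argument is a one-step contraction $\Delta_{k+1} \leq \lambda \Delta_k$ with $\lambda = d\theta^2 \exp(-(d\theta^2-1)_+/2) Z(W)$, which is $<1$ under~\eqref{eqn:high-snr-tight}. The multiplicativity $\phi(L_1 + L_2) = \phi(L_1)\phi(L_2)$ and independence across subtrees give
\[
Z(\pi_{k+1}) = Z(W) \cdot \E[\psi(\pi_k)^D], \qquad \psi(\pi) := \E_{\tilde L \sim \pi}[\phi(F_\theta(\tilde L))\,|\,+1],
\]
which in the Galton--Watson case equals $Z(W)\, e^{d(\psi(\pi_k)-1)}$. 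Subtracting and applying the mean value theorem,
\[
\Delta_{k+1} \leq Z(W) \cdot \E[D\, \psi_*^{D-1}] \cdot (\psi(\nu_k) - \psi(\mu_k))
\]
for some $\psi_* \in [\psi(\mu_k), \psi(\nu_k)]$. Two sharp estimates close the loop. First, expanding $\phi(F_\theta(L))$ via BMS symmetrization (so that $\psi(\pi) = \E_\pi[\tfrac{1+\theta}{2} e^{-F_\theta(L)/2} + \tfrac{1-\theta}{2} e^{F_\theta(L)/2}]$) and using the identity $\E[\phi(L)^2\,|\,+1] = 1$---which kills the linear term in the expansion---gives $\psi(\pi) - 1 \approx -\theta^2(1 - Z(\pi))$ at leading order; the degradation inequality~\eqref{eq:deg_1} promotes this to the rigorous
\[
\psi(\nu_k) - \psi(\mu_k) \leq \theta^2 \, \Delta_k.
\]
Second, $\E[D \psi_*^{D-1}]$ (equal to $d\, e^{d(\psi_* - 1)}$ in the Poisson case) is bounded by $d \exp(-(d\theta^2-1)_+/2)$ via a uniform upper bound on $\psi$ along the iteration, itself coming from the monotone convergence of the BP iterates to the informative fixed point. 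Combining gives $\Delta_{k+1} \leq \lambda \Delta_k$, so $\Delta_k \to 0$ geometrically and (BI) holds.

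The numerical threshold $\alpha^* \approx 3.513$ is the unique root above $1$ of $\alpha e^{-(\alpha-1)/2} = 1$; for $d\theta^2 \leq 1$ the exponential factor equals $1$ and the condition reduces to $d\theta^2 Z(W) < 1$, which is automatic because $P_e(W) \neq 1/2$ forces $Z(W) < 1$. The main obstacle is the first of the two estimates: extracting the correct $\theta^2$ factor in the $\psi$-vs-$Z$ comparison, rather than the naive $\theta$ that a blunt Taylor expansion or pointwise Lipschitz bound on $\phi \circ F_\theta$ would yield. This is precisely where the choice of potential $\phi(r) = e^{-r/2}$ is essential---the identity $\E[\phi(L)^2\,|\,+1] = 1$ cancels the linear term in $\psi$'s expansion and sets up the comparison at the sharp quadratic order in $\theta$. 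The regular-tree case runs in parallel to the Galton--Watson one, with $\E[\psi^D]$ replaced by $\psi^d$ and the same uniform bound on $\psi$ along the trajectory.
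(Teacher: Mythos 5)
Your architecture matches the paper's exactly: the potential $\phi(r) = e^{-r/2}$, the translation to the $\Delta$-coordinate where $g(\Delta) = -2\sqrt{\Delta(1-\Delta)}$ is strongly convex and decreasing, the use of degradation $\bE[\Delta_k\mid\tilde\Delta_k]\le\tilde\Delta_k$, the reduction to a one-step contraction via Proposition~\ref{prop:reduce-to-contraction}, and the factorization $Z(\pi_{k+1}) = Z(W)\cdot\bE[\psi(\pi_k)^D]$. The first of your two estimates, $\psi(\nu_k)-\psi(\mu_k)\le\theta^2\,\Delta_k$, is correct and is essentially the paper's step: it follows because $q(\Delta)=p(\Delta*\delta)-\theta^2 p(\Delta)$ is convex (by~\eqref{eqn:high-snr-phi-star-pp}) and decreasing (since $q'(\tfrac12)=0$), so degradation plus conditional Jensen gives $\bE[q(\tilde\Delta_k)]\le\bE[q(\Delta_k)]$, which is exactly the paper's convexity-of-$G$ argument phrased differently. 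Your mean-value-theorem treatment of $\bE[\psi^D]$ in place of the paper's telescoping interpolation over children $i=0,\dots,d$ is an equivalent packaging and lands on the same constants $C_1$, $C_2$.

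The genuine gap is in your second estimate, the bound $\bE[D\,\psi_*^{D-1}]\le d\exp(-\tfrac{(d\theta^2-1)_+}{2})$. You attribute the needed uniform-in-$k$ bound $\psi(\nu_k)\le\sqrt{1-\theta^2\,\chi^2_k}$ with $\chi^2_k\gtrsim\frac{(d\theta^2-1)_+}{d\theta^2}$ to ``monotone convergence of the BP iterates to the informative fixed point.'' This is circular: uniqueness/informativeness of the fixed point is precisely what the theorem is establishing, and a priori the survey-only iterates $\tilde\mu_k$ could settle near a weakly-informative point where $\psi$ stays close to $1$, in which case the exponential factor degenerates and the contraction fails for $d\theta^2>1$. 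What is actually required is a lower bound on $\liminf_k C_{\chi^2}(\tilde M_k)$ proved \emph{independently} of fixed-point structure, and this is nontrivial: the paper proves it (Propositions~\ref{prop:high-snr-chi2} and~\ref{prop:high-snr-majority}) by first degrading the general BMS survey $W$ to a $\BSC$-noisy observation of the leaves (via \cite[Lemmas 2,3]{roozbehani2019low}), then running a majority-vote estimator and a variance recursion to get the sharp Kesten--Stigum-type bound $\lim_k\frac{\Var^+ S_k}{(\bE^+ S_k)^2}=\frac{1}{d\theta^2-1}$ (Poisson) or $\frac{1-\theta^2}{d\theta^2-1}$ (regular), and finally applying Lemma~\ref{lemma:chi2-var}. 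This majority-decoder analysis, not any convergence-to-fixed-point claim, is the missing ingredient; without it your contraction factor cannot be controlled in the regime $d\theta^2>1$ where the exponential term is doing all the work.
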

We remark that~\eqref{eqn:high-snr-tight} is a relaxation of a sharper bound in Prop.~\ref{prop:high-snr} (e.g., for regular trees with $d=2$ (BI) is proven for all cases except $d\theta^2 \in (1,1.62)$). The following corollary lists a few direct consequences of Theorem \ref{thm:BI-allW}.

\begin{corol}\label{cor:BI-conseq}
In the setting of Theorem \ref{thm:BI-allW}, if any of the following is true, then (BI) holds for $(T, \rho, \theta, W)$:
    (i) $Z(W) < \frac{\sqrt e}2\approx 0.824$;
    (ii) $P_e(W) < \frac 12 - \frac 14 \sqrt{4-e} \approx 0.217$;
    (iii) $W = \BEC_\epsilon$ and with $\epsilon < \frac{\sqrt e}2\approx 0.824$.
\end{corol}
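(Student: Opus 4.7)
The plan is to reduce each of parts (i), (ii), (iii) to the hypothesis of Theorem~\ref{thm:BI-allW}, namely $g(d\theta^2)\,Z(W) < 1$ where $g(x) := x \exp(-(x-1)_+/2)$, together with $P_e(W) \ne \tfrac12$. The first step is a one-variable optimization: for $x \in [0,1]$ one has $g(x) = x \le 1$, while for $x > 1$, differentiating $g(x) = x e^{-(x-1)/2}$ gives $g'(x) = e^{-(x-1)/2}(1 - x/2)$, so $g$ is increasing on $[1,2]$, decreasing on $[2,\infty)$, and $\sup_{x \ge 0} g(x) = g(2) = 2/\sqrt e$. Consequently, $Z(W) < \sqrt e/2$ immediately forces $g(d\theta^2)\,Z(W) < 1$ for every $d\theta^2 \ge 0$, proving part (i). The side-condition $P_e(W) \ne \tfrac12$ is automatic here since, for a BMS channel, $Z(W) < 1$ forces the two input-conditional laws to differ, whence $P_e(W) < \tfrac12$.

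Part (iii) follows by direct evaluation. For $W = \BEC_\epsilon$, $Z(W) = \sum_y \sqrt{W(y|0)W(y|1)}$ receives zero contribution from the two non-erasure outputs (they are disjoint under the two inputs) and contributes $\sqrt{\epsilon \cdot \epsilon} = \epsilon$ from the erasure output, so $Z(\BEC_\epsilon) = \epsilon$. The condition $\epsilon < \sqrt e/2$ is then the specialization of (i) to the BEC.

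Part (ii) relies on the standard BMS comparison $Z(W) \le 2\sqrt{P_e(W)(1-P_e(W))}$, which follows from $\min(p,q) \le \sqrt{pq}$ applied termwise in $P_e(W) = \tfrac12 \sum_y \min\{W(y|0),W(y|1)\}$ together with Cauchy--Schwarz, or equivalently from the fact that among BMS channels with fixed error probability the $\BSC$ maximizes $Z$. Writing $p := P_e(W) \in [0,\tfrac12)$, the bound $Z(W) < \sqrt e/2$ is implied by $4p(1-p) < e/4$, i.e.\ $(1-2p)^2 > (4-e)/4$, i.e.\ $p < \tfrac12 - \tfrac14\sqrt{4-e}$, which is exactly the hypothesis of (ii); this then reduces to (i). No step here is a real obstacle: the only non-immediate ingredient is the inequality $Z(W) \le 2\sqrt{P_e(W)(1-P_e(W))}$, and the corollary is precisely the clean specialization of Theorem~\ref{thm:BI-allW} to conditions on single scalar summaries of $W$.
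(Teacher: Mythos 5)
Your proof is correct and follows essentially the same route as the paper: reduce each condition to $Z(W)<\sqrt{e}/2$, observe $\sup_{x\ge 0} x\,\exp(-(x-1)_+/2)=2/\sqrt{e}$ for (i), use $Z(W)\le 2\sqrt{P_e(1-P_e)}$ (by concavity of $\Delta\mapsto 2\sqrt{\Delta(1-\Delta)}$, equivalently Cauchy--Schwarz) for (ii), and compute $Z(\BEC_\epsilon)=\epsilon$ for (iii). The only small addition beyond the paper's proof is your explicit remark that $Z(W)<1$ already forces $P_e(W)<\tfrac12$, which is worth noting but is immediate from the definitions.
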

\begin{proof}
For (i) we observe that   $  \sup_{\alpha \ge 0} (\alpha \exp(-\frac{\alpha-1}2)) = \frac{2}{\sqrt e}
$. For (ii) we define $p(\Delta) = 2\sqrt{\Delta(1-\Delta)}$ and notice that 
  $
  Z(W) = \bE[p(\Delta_W)] \le p(\bE \Delta_W) = p(P_e(W))
  $
  because the function $p$ is concave.
  So when $P_e(W) < \frac 12 - \frac 14 \sqrt{4-e}$, we have
  $Z(W) < \frac{\sqrt e}{2}$.
(iii) follows from (i).
\end{proof}

Theorem~\ref{thm:BI-allW} is a consequence of the following more general result, that we state informally here (for the
full statement see~ Prop.\ref{prop:uniqueness-bp-fixed-point} in Appendix \ref{app:uniqueness}).

\begin{prop}[Informal, uniqueness of BP fixed point] For BOTS if~\eqref{eqn:high-snr-tight} holds, then the BP
distributional fixed point is unique. For BOT if $d\theta^2 < 1$ or $d\theta^2 > \alpha^*$ then the non-trivial fixed
point is unique.
\end{prop}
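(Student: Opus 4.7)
The plan is to reduce uniqueness of the distributional BP fixed point to a scalar contraction inequality via the potential method, applied at the log-likelihood ratio (LLR) level. Writing the recursion as
\begin{equation*}
L \stackrel{d}{=} \Delta_W + \sum_{i=1}^{d} F_\theta(L_i),
\end{equation*}
with $F_\theta(x) = 2\arctanh(\theta \tanh(x/2))$, the $L_i$ i.i.d.\ copies of $L$, and $\Delta_W$ an independent copy of the survey LLR satisfying $\E[e^{-\Delta_W/2}] = Z(W)$, I would aim to show that under~\eqref{eqn:high-snr-tight} any two solutions $L^{(1)}, L^{(2)}$ coincide in law; the BOT statement then follows by specializing to the trivial $W$ (so $Z(W)=1$, $\Delta_W \equiv 0$) and restricting to non-trivial fixed points.

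The first step invokes Proposition~\ref{prop:reduce-to-contraction}: any two fixed points correspond to BMS channels from $\sigma_\rho$, and among those arising from BP with versus without boundary observations one is always a degradation of the other, giving a canonical order $L^{(1)} \succeq L^{(2)}$. The key choice is the potential $\phi(r) = e^{-r/2}$, which for a symmetric consistent LLR distribution coincides with the Bhattacharyya coefficient: $\E[\phi(L^{(j)})] = Z_j$. This functional has two essential properties: it is convex (so degradation yields $Z_1 \le Z_2$, and equality of $Z$'s forces equality in distribution via the strict monotonicity of Bhattacharyya under nontrivial degradation), and it is multiplicative under independent sums. These together turn the fixed-point equation into the scalar identity
\begin{equation*}
Z_j = Z(W)\bigl(\E[\phi(F_\theta(L^{(j)}))]\bigr)^d, \qquad j=1,2,
\end{equation*}
so the goal reduces to forcing $Z_1 = Z_2$.

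The heart of the argument is a contraction bound of the form $\E[\phi(F_\theta(L^{(2)}))] - \E[\phi(F_\theta(L^{(1)}))] \le C \cdot (Z_2 - Z_1)$. Using the explicit identity $e^{-F_\theta(r)/2} = \sqrt{(1-\theta \tanh(r/2))/(1+\theta\tanh(r/2))}$, one sees that $\phi\circ F_\theta$ is $\theta^2$-contracting in $\phi$ near $r=0$ and strictly more contracting as $|r|$ grows, because $F_\theta$ saturates. Combining this with the mean-value theorem for $x \mapsto x^d$ on the interval between $(Z_1/Z(W))^{1/d}$ and $(Z_2/Z(W))^{1/d}$, both of which lie in $[0,1]$, and quantifying how much mass the less-informative fixed point $L^{(2)}$ places away from the origin (which the fixed-point equation itself forces once $d\theta^2 > 1$), one should arrive at
\begin{equation*}
Z_2 - Z_1 \le d\theta^2 \exp\!\bigl(-\tfrac12(d\theta^2-1)_+\bigr)\, Z(W)\, (Z_2 - Z_1).
\end{equation*}
Under~\eqref{eqn:high-snr-tight} the coefficient on the right is strictly below $1$, so $Z_1 = Z_2$ is forced, and this lifts to equality in distribution. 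The Poisson Galton-Watson case then follows by replacing $(\cdot)^d$ with $\exp(d\,((\cdot)-1))$ via the identity $\E[z^N] = e^{d(z-1)}$ for $N \sim \Po(d)$.

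I expect the main difficulty to lie in establishing the sharp refinement carrying the factor $\exp(-(d\theta^2-1)_+/2)$. A naive first-order Lipschitz estimate for $\phi \circ F_\theta$ only yields the cleaner coefficient $d\theta^2\, Z(W)$, which already covers the low-SNR regime $d\theta^2\, Z(W) < 1$ (and in particular $d\theta^2 < 1$), but does not reach $\alpha^* \approx 3.513$. Producing the exponential correction requires a second-order analysis that uses the fixed-point equation to bootstrap a quantitative lower bound on the mass of $L^{(2)}$ away from zero, then feeds this back into the Lipschitz estimate through the saturation of $F_\theta$: once the ordered phase kicks in, the displaced mass of $L^{(2)}$ is precisely the source of the extra damping needed to cross the Kesten-Stigum gap.
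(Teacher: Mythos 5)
Your blueprint matches the paper's: the same potential $\phi(r)=e^{-r/2}$ (whose expectation is the Bhattacharyya coefficient), the same use of multiplicativity over independent sums, the same reliance on a degradation ordering, the same contraction constant, and the same Laplace-transform reduction $\bE[z^N]=e^{d(z-1)}$ for the Poisson case. The diagnosis that the hard part is the exponential correction is also shared. Still, two genuine gaps remain.

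First, you compare two fixed points $L^{(1)}$, $L^{(2)}$ directly and posit a ``canonical order $L^{(1)}\succeq L^{(2)}$,'' but two arbitrary BP fixed points need not be comparable under degradation, and both your one-sided contraction and the step upgrading $Z_1=Z_2$ to equality in law are vacuous for unordered pairs. The paper avoids this: given fixed points $P,Q$, it takes $r=\max\{P_e(P),P_e(Q)\}$ so that $\BSC_r$ is more degraded than both, then shows that BP iterates with $P$-observed leaves and with $\BSC_r$-observed leaves converge to the same limit (similarly for $Q$), and since fixed points are stationary under iteration, $P=Q$ (Prop.~\ref{prop:uniqueness-bp-fixed-point}). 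Second, your ``bootstrap'' producing $e^{-(d\theta^2-1)_+/2}$ is left unrealized. The paper's device (Props.~\ref{prop:high-snr-chi2}, \ref{prop:high-snr-majority}) lower-bounds the $\chi^2$-capacity of the root-to-level-$k$ channel by the majority statistic $S_k=\sum_{v\in L_k}\nu_v$, giving $\bE(1-2\tilde\Delta_k)^2 \gtrsim \frac{d\theta^2-1}{(d-1)\theta^2}$ for large $k$; feeding this into $\bE[e^{-F_\theta(\tilde R)/2}]=\bE\sqrt{1-\theta^2(1-2\tilde\Delta)^2}\le\sqrt{1-\theta^2\bE(1-2\tilde\Delta)^2}$ yields the factor $(1-\frac{(d\theta^2-1)_+}{d-1})^{(d-1)/2}$, of which $e^{-(d\theta^2-1)_+/2}$ is a relaxation. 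Finally, to make your contraction inequality $\bE[\phi(F_\theta(L^{(2)}))]-\bE[\phi(F_\theta(L^{(1)}))]\le C(Z_2-Z_1)$ rigorous under the degradation coupling $\bE[\Delta^{(1)}\mid\Delta^{(2)}]\le\Delta^{(2)}$, you need the interpolated function $G(\Delta)$ to be decreasing and convex; this is precisely the content of Prop.~\ref{prop:reduce-to-contraction} together with the $G''$ computation in Appendix~\ref{app:high-snr}, and it is more than a pointwise Lipschitz estimate of $\phi\circ F_\theta$.
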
 
We demonstrate the region of BP-uniqueness from Corollary \ref{cor:BI-conseq} on Figure \ref{fig:dt2}.
	\begin{figure}[ht]
	\begin{center}
		\includegraphics[scale=0.5]{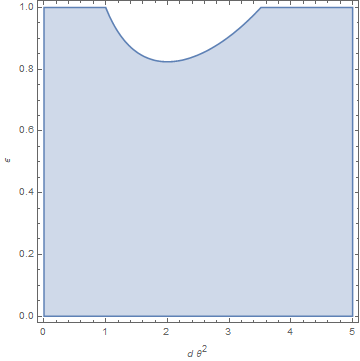}
		\quad
		\includegraphics[scale=0.5]{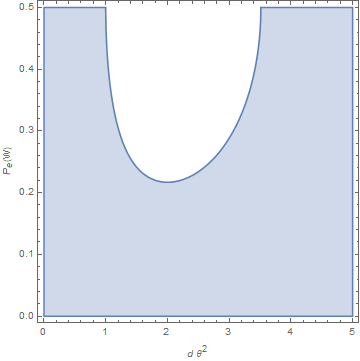}
		\end{center}
		\caption{Left: Region of BP uniqueness for BEC survey from Corollary \ref{cor:BI-conseq}(iii).\\
		Right: Region of BP uniqueness for BMS survey from Corollary \ref{cor:BI-conseq}(ii).}
		\label{fig:dt2}
	\end{figure}
We note that, taking the limit $\epsilon \to 1^-$, Theorem \ref{thm:BI-allW} implies that revealing an (arbitrarily)
small fraction of vertex labels gives the same information about the root bit, as revealing the whole boundary labels at
large distance, even in the reconstruction regime, cf.~\eqref{eq:bi_2}.

\begin{conj}\label{conj:main}
    Let $T$ be a regular tree or a Galton-Watson tree with Poisson offspring distribution, with root vertex $\rho$.
    Then (BI) holds for $(T, \rho, \theta, W)$ for all $0 <\theta< 1$ and all $W$ such that $P_e(W) \neq \frac 12$.
\end{conj}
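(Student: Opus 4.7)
The plan is to push the potential-method framework behind Proposition~\ref{prop:uniqueness-bp-fixed-point} to cover the full range $\theta \in (0,1)$ and every $W$ with $P_e(W) \neq \tfrac12$; once BP uniqueness is known throughout that range, (BI) follows exactly as in the paper.  The two structural ingredients I would retain are the degradation comparison \eqref{eq:deg_1} between BOTS with and without leaf observations, and the use of a potential function to convert distributional uniqueness of the BP fixed point into a one-step contraction inequality.  The objective is to \emph{remove}, rather than merely relax, the quantitative condition $d\theta^2 \exp(-\tfrac{(d\theta^2-1)_+}{2}) Z(W) < 1$.

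First, I would replace the single potential $\phi(r) = e^{-r/2}$ by a family $\phi_s(r) = e^{-sr}$ (and possibly symmetric mixtures thereof), and reoptimize the resulting contraction coefficient jointly in the parameter $s$ and in the survey law $W$.  The offending factor $d\theta^2 \exp(-\tfrac{(d\theta^2-1)_+}{2})$ is an artifact of the specific evaluation point built into $\phi(r) = e^{-r/2}$; allowing $s = s(d,\theta, W)$ to slide should yield a strictly sharper bound whose worst case no longer blows up near $d\theta^2 = 2$.  In parallel, I would aim to replace the coarse $Z(W)$-dependence by a tighter survey functional of the form $\bE[\phi_s(L)]$, where $L$ is the LLR produced by a single observation of $W$; such a quantity is strictly less than $1$ for every non-trivial $W$, matching exactly the condition $P_e(W) \neq \tfrac12$ required by the conjecture.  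Since the one-step BP operator is unlikely to contract throughout the intermediate window $d\theta^2 \in (1, \alpha^*)$, a second ingredient is to pass to a $k$-block recursion: iterate the BP map $k$ times while collecting $k$ layers of survey per block, and establish strict $k$-step contraction for a finite $k = k(d,\theta, W)$.  This requires tracking how survey information accumulates along root-to-leaf paths and composing the degradation relations at each level.

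The decisive obstacle is the joint regime $d\theta^2 \in (1, \alpha^*)$ with $Z(W)$ arbitrarily close to $1$.  There the leaf-only BOT sits firmly in the reconstruction phase and supports a genuinely non-trivial BP fixed point, while each survey observation carries vanishing information, so neither a one-shot potential inequality nor a finite-depth coupling can a priori rule out an extra fixed point.  I expect progress to require either (i) a monotonicity principle in the degradation order on $W$, letting one continuously deform from a ``strong'' survey covered by Theorem~\ref{thm:BI-allW} down to arbitrarily weak surveys while preserving uniqueness, combined with a compactness argument on the space of BMS channels; or (ii) a genuinely new argument that closes the remaining window $d\theta^2 \in (1, \alpha^*)$ for leaf-only BOT—in the spirit of, but sharper than, the Mossel--Neeman--Sly analysis~\cite{Mossel_2016}—since by the sandwiching observed around \eqref{eq:bi_2} this already implies the conjecture.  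The essential difficulty is that the current potential method is a purely local (one-step) test, whereas in the gap region (BI) is morally a global statement about the full law of the root-to-leaf LLR, and any proof seems to need a mechanism—whether monotonicity, multi-scale contraction, or a new structural identity on the BP fixed point—that reflects this global character.
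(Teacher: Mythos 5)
The statement you are addressing is Conjecture~\ref{conj:main}, which the paper explicitly leaves open; there is no proof of it in the paper to compare against. Your text is not a proof but a programme for one, and you say as much yourself. That said, your diagnosis of where the difficulty lies matches the paper's own implicit assessment: the gap $d\theta^2 \in (1,\alpha^*)$ with $Z(W)$ close to $1$ is precisely what Theorem~\ref{thm:BI-allW} cannot reach, and the observation that weak-survey BI in that window would subsume the still-open uniqueness question for leaf-only BOT (via the sandwich around \eqref{eq:bi_2}) is a correct and important point.

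On the merits of the proposed directions: reparametrizing the potential as $\phi_s(r)=e^{-sr}$ is a natural idea, but one should be careful that the strong-convexity hypothesis in Proposition~\ref{prop:reduce-to-contraction} on the induced $g$ constrains $s$; the choice $s=\tfrac12$ is not arbitrary but is exactly the exponent that makes $g(\Delta)=-2\sqrt{\Delta(1-\Delta)}$ and ties the contraction constant to $Z(W)$. Moving away from $s=\tfrac12$ breaks the clean Bhattacharyya interpretation and it is not obvious the trade is favorable; a short computation of the contraction factor for general $s$ would tell you whether anything is actually gained near $d\theta^2=2$. Replacing $Z(W)$ by $\bE[\phi_s(L)]$ is essentially the same quantity once $s$ is fixed, so this does not add leverage by itself. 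The $k$-block recursion and the degradation-monotonicity/compactness deformation are more promising in principle, but both face the same structural issue you identify: in the gap region the leaf-only model has a genuine non-trivial fixed point, so any argument must detect that an arbitrarily weak survey destabilizes that fixed point, which is a global rather than one-step statement. In short, your proposal is a reasonable roadmap and correctly localizes the open problem, but it does not close it, and the paper itself does not claim to.
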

If Conjecture \ref{conj:main} holds, the proof of Theorem \ref{thm:HSBM-BOT} gives a precise characterisation of $\mathcal{H}(a,b)$, as in~\eqref{eq:HSBM}, in terms of BOTS entropies for the entire range of $a,b$. 

\noindent
We now prove Theorem \ref{thm:HSBM-BOT}, and defer the proof of Theorem \ref{thm:BI-allW} to Section \ref{sec:recon-on-tree}. 
\section{Proof of Theorem \ref{thm:HSBM-BOT}}
  Let us denote $f(\epsilon) = H(X|G,Y^\epsilon) $, where similarly as before $Y^\epsilon$ is a $\BEC_\epsilon$-survey that reveals the true label of each node independently with probability $1-\epsilon$. Note that $f(1) = H(X|G) $. Let us replace the single parameter $\epsilon$ by a set of parameters $\vec \epsilon = (\epsilon_u)_{u \in V(G)}$ (for each vertex $u$, $X_u$ is revealed with probability $1-\epsilon_u$), and let us denote $Y_{\sim u}^\epsilon = \{ Y_v^\epsilon: v \in V(G), v\neq u \}$ and $X_{\sim u} = \{ X_v: v \in V(G), v \neq u\}$. Then \begin{align}
  f(\vec \epsilon) = (1-\epsilon_u) H(X|G, X_u, Y_{\sim u}^\epsilon) + \epsilon_u H(X|G,Y_{\sim u}^\epsilon) \end{align} and by chain rule
\begin{align}
\frac{\partial}{\partial \epsilon_u} f(\vec \epsilon)& = H(X|G, Y_{\sim u}^\epsilon) - H(X|G, X_u, Y_{\sim u}^\epsilon) \\
& = H(X_u, X_{\sim u} | G, Y_{\sim u}^\epsilon) - H(X_{\sim u} | G, X_u, Y_{\sim u}^\epsilon)\\
& = H(X_u | G, Y_{\sim u}^\epsilon).
\end{align}
Then, setting $\epsilon_u = \epsilon$ for all $u \in V(G)$, we get by symmetry
\begin{align}
f'(\epsilon) = \sum_{u \in V(G)} H(X_u | G, Y_{\sim u}^\epsilon)= n H(X_1 | G, Y_{\sim 1}^\epsilon).
\end{align} Thus, by bounded convergence
\begin{align} \label{eq:HSBM-integrand}
\lim_{n \rightarrow \infty} \frac{1}{n}H(X|G) = \int_0^1 \lim_{n \rightarrow \infty} H(X_{1} |G,Y_{\sim 1}^\epsilon) d\epsilon.
\end{align}
Take $k =\frac{\log{n}}{10 \log{2(a+b)}}$ small enough compared to $n$, such that the neighborhood of vertex $1$ at depth $k$ is a tree with high probability (this is for instance proved as Proposition 2 in \cite{MNSplanted15}), and denote such neighborhood by $T_k$. Specifically, w.h.p. $T_k$ is a Galton-Watson tree with Poisson$\left(\frac{a+b}{2}\right)$ offspring distribution, rooted at $1$, and the labels in $X_{T_k}$ are distributed as BOT with flip probability $\frac{b}{a+b}$. Moreover, let $X_{L_k} $ be the vertices at distance exactly $k$ from $1$, and let $Y_{\sim 1, T_k}^\epsilon$ denote the survey on nodes at distance at most $k$ from $1$ (excluding $1$). We bound the integrand by the following:
\begin{align} \label{eq:boundsentropy}
  H(X_1|T_k, Y_{\sim 1, T_k}^\epsilon, X_{L_k}) + o_k(1)  \leq H(X_{1} |G,Y_{\sim 1}^\epsilon) \leq H(X_1 |T_k,  Y_{\sim 1, T_k}^\epsilon).
\end{align}
For the inequality on the right, we simply removed conditioning terms and thus increased the conditional entropy, specifically we ignored any information from the graph or from the survey on nodes at distance $\geq k$ to $1$. The inequality on the left requires the following lemma, that is a direct consequence of Proposition 2 and Lemma 4.7 in \cite{MNSplanted15}.
\begin{lemma} \label{lem:SBM-markov}
$ H(X_{1} |G,Y_{\sim 1}^\epsilon, X_{L_k} ) =  H(X_{1} |T_k,Y_{\sim 1, T_k }^\epsilon, X_{L_k} ) + o_k(1).$
\end{lemma}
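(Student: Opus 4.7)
The plan is to express the gap between the two conditional entropies as a single mutual information and show it is $o_k(1)$. Since both $T_k$ and $Y_{\sim 1, T_k}^\epsilon$ are deterministic functions of $(G, Y_{\sim 1}^\epsilon)$, the chain rule for entropy yields
\begin{equation*}
H(X_1 \mid T_k, Y_{\sim 1, T_k}^\epsilon, X_{L_k}) - H(X_1 \mid G, Y_{\sim 1}^\epsilon, X_{L_k}) = I(X_1 ; G_{\mathrm{out}}, Y_U^\epsilon \mid T_k, X_{L_k}, Y_{\sim 1, T_k}^\epsilon),
\end{equation*}
where $U = V(G) \setminus T_k$, $G_{\mathrm{out}}$ denotes the edges of $G$ with at least one endpoint in $U$, and $Y_U^\epsilon$ denotes the survey on $U$. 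By Proposition 2 of \cite{MNSplanted15}, $T_k$ is a tree with probability $1 - o_k(1)$; since the entropy gap is uniformly bounded by $H(X_1) \leq 1$, it suffices to show the mutual information is $o_k(1)$ on the tree event.

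Next, I would exploit the SBM's conditional independence structure: given the full label vector $X$, the edges of $G$ are independent Bernoullis and the components of $Y^\epsilon$ are independent BEC outputs. Consequently, on the tree event and conditional on $(T_k, X_{L_k}, X_U)$, the pair $(G_{\mathrm{out}}, Y_U^\epsilon)$ is generated without further reference to $X_1$, because vertex $1$ has no incident edges in $G_{\mathrm{out}}$ once $T_k$ is given. The data processing inequality therefore yields
\begin{equation*}
I(X_1 ; G_{\mathrm{out}}, Y_U^\epsilon \mid T_k, X_{L_k}, Y_{\sim 1, T_k}^\epsilon) \leq I(X_1 ; X_U \mid T_k, X_{L_k}, Y_{\sim 1, T_k}^\epsilon) \leq I(X_1 ; X_U \mid T_k, X_{L_k}),
\end{equation*}
where the second inequality uses that $Y_{\sim 1, T_k}^\epsilon$ is a function of $X_{T_k \setminus \{1\}}$, conditionally independent of $X_U$ given $(T_k, X_{L_k})$.

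Finally, I would bound $I(X_1 ; X_U \mid T_k, X_{L_k})$ directly. Unconditionally $X_1 \perp X_U$, since SBM labels are i.i.d. uniform. The sole mechanism by which conditioning on $T_k$ couples $X_1$ with $X_U$ is the bundle of $|U| \leq n$ \emph{non-edges} from vertex $1$ to the exterior encoded in $T_k$: each contributes a log-likelihood ratio of magnitude $O(|a-b|/n)$, so the aggregate LLR for $X_1$ has mean zero and variance $O((a-b)^2/n)$. A standard quadratic comparison between LLR variance and mutual information then gives
\begin{equation*}
I(X_1 ; X_U \mid T_k, X_{L_k}) = O(1/n) = o_k(1)
\end{equation*}
under the scaling $k = \Theta(\log n)$. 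This quantitative control on the non-edge leakage is precisely the content of Lemma 4.7 of \cite{MNSplanted15}.

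The main obstacle is the failure of strict Markovianity in the SBM: the qualitative chain $X_1 \to X_{L_k} \to X_U$ is \emph{not} valid because the non-edges between interior and exterior vertices do leak a vanishing-but-nonzero amount of information about $X_1$, and one must quantify this leakage rather than wave it away. Lemma 4.7 of \cite{MNSplanted15} supplies the sharp $O(1/n)$ bound. Uniformity in $\epsilon$ is automatic: the survey $Y^\epsilon$ enters only through $X_U$, whose dependence on $X_1$ is already controlled by the bound above, independently of the BEC parameter.
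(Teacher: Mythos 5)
Your overall strategy — chain-rule decomposition, DPI to reduce to $I(X_1;X_U\mid\cdot)$, and appeal to the locally tree-like structure and the approximate Markov property — is exactly the paper's intent: the paper proves the lemma in one line by citing Proposition~2 and Lemma~4.7 of \cite{MNSplanted15}. The setup (writing the entropy gap as $I(X_1;G_{\mathrm{out}},Y_U^\epsilon\mid T_k,X_{L_k},Y_{\sim 1,T_k}^\epsilon)$) and the final appeal to the $O(1/n)$ non-edge leakage are correct.

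However, the second inequality, $I(X_1;X_U\mid T_k,X_{L_k},Y_{\sim 1,T_k}^\epsilon)\le I(X_1;X_U\mid T_k,X_{L_k})$, is not justified by the independence you invoke. If $Y\perp B\mid C$ (your claim, with $Y=Y_{\sim 1,T_k}^\epsilon$, $B=X_U$, $C=(T_k,X_{L_k})$), the chain-rule identity $I(Y;B\mid C)+I(A;B\mid C,Y)=I(A;B\mid C)+I(Y;B\mid A,C)$ gives $I(A;B\mid C,Y)\ge I(A;B\mid C)$, the \emph{reversed} inequality; the version that would give what you want is $Y\perp B\mid(C,A)$, i.e., $Y_{\sim 1,T_k}^\epsilon\perp X_U\mid(T_k,X_{L_k},X_1)$. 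More fundamentally, neither conditional independence holds exactly in the SBM: conditional on $T_k$, the interior labels $X_{T_{k-1}\setminus\{1\}}$ (which drive $Y_{\sim 1,T_k}^\epsilon$) are coupled to $X_U$ through the non-edges between $T_{k-1}$ and $U$ encoded in $T_k$ — precisely the leakage the lemma is trying to control — so this reduction cannot be made at zero cost. The fix is to bound $I(X_1;X_U\mid T_k,X_{L_k},Y_{\sim 1,T_k}^\epsilon)$ directly: since, given the full label vector $X$, the survey is independent of $G$, the argument behind \cite[Lemma~4.7]{MNSplanted15} tolerates the extra conditioning on $Y_{\sim 1,T_k}^\epsilon$ and still yields an $O(1/n)$ bound uniformly in $\epsilon$; alternatively, absorb the discrepancy explicitly into the $o_k(1)$ budget rather than asserting it vanishes.
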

In words, Lemma \ref{lem:SBM-markov} states that after conditioning on the leaves, the information coming from the graph outside $T_k$ (including non-edges) becomes negligible, i.e. the model is asymptotically a Markov field. By Theorem \ref{thm:BI-allW}, if $\frac{(a-b)^2}{2(a+b)}\leq1$ or $\frac{(a-b)^2}{2(a+b)}\geq \alpha^* $, then (BI) holds for $(T_k, 1, \frac{a-b}{a+b},\BEC_\epsilon) $, for all $\epsilon <1$, thus the leftmost and the rightmost terms in (\ref{eq:boundsentropy}) are asymptotically equal. This means that the limit in the integrand in~\eqref{eq:HSBM-integrand} exists for all $\epsilon \in(0,1)$, thus (i) holds.

On the other hand, by Corollary \ref{cor:BI-conseq}(iii), for all $\epsilon < \epsilon^* = \frac{\sqrt{e}}{2} \approx 0.824$ and for all $a,b$ the (BI) holds for $(T_k, 1, \frac{a-b}{a+b},\BEC_\epsilon) $. Thus
\begin{align}
  \lim\inf_{n \to \infty}   \frac{1}{n} H(X|G) = \int_0^{\epsilon^*} \lim_{k \to \infty} H(X_1 | T_k,Y_{\sim 1, T_k}^\epsilon , X_{L_k}) d\epsilon + \xi_{\text{inf}},
\end{align}
with 
\begin{align}
\xi_{\text{inf}} &= \int_{\epsilon^*}^1 \lim\inf_{n \to \infty}  H(X_1|G,Y_{\sim1}^\epsilon) - \lim_{k \to \infty}H(X_1|T_k, Y_{\sim1,T_k}^\epsilon, X_{L_k}) d\epsilon \\
&\leq (1-\epsilon^*) \lim_{k \to \infty}I(X_1;X_{L_k} | T_k) \leq (1-\epsilon^*) \approx 0.178 .
\end{align} 
The same holds for $\lim\sup $ and $\xi_{\text{sup}}$.


\section{Proof of Theorem \ref{thm:BI-allW}}\label{sec:recon-on-tree}



Recall the BOTS model defined in Section \ref{sec:bi-and-sbm}.
Let $M_k$ denote the BMS channel $\sigma_\rho\to (\omega_{T_k}, \sigma_{L_k})$
and $\tilde M_k$ denote the BMS channel $\sigma_\rho\to \omega_{T_k}$.
Let $P_{\Delta_k}$ (resp.~$P_{\tilde \Delta_k}$) be the distribution of the $\Delta$-component of the BMS $M_k$ (resp.~$\tilde M_k$).
We prove the following strengthening of Theorem \ref{thm:BI-allW}.
\begin{thm}\label{thm:uniqueness-survey}
  In the setting of Theorem \ref{thm:BI-allW},
   $P_{\Delta_k}$ and $P_{\tilde \Delta_k}$ converge in distribution to the same distribution as $k\to \infty$.
  In particular,
  \begin{align}
    \lim_{k\to \infty} P_e(M_k) &= \lim_{k\to \infty} P_e(\tilde M_k),\\
    \lim_{k\to \infty} C(M_k) &= \lim_{k\to \infty} C(\tilde M_k).
  \end{align}
\end{thm}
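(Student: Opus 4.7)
The plan is to realize $P_{\Delta_k}$ and $P_{\tilde\Delta_k}$ as two sequences generated by the same BP distributional operator from different leaf initializations, then collapse their limits using the potential $\phi(r) = e^{-r/2}$ already announced in the introduction. Let $\mathcal{B}$ denote the one-step BP operator on symmetric LLR distributions: $\mathcal{B}(P)$ is the law of $L_W + \sum_{i=1}^{D} F_\theta(L^{(i)})$ with $L^{(i)} \iid P$, $L_W$ the (independent) LLR of the survey $W$ at the parent, $F_\theta(l) = 2\arctanh(\theta \tanh(l/2))$, and $D$ either constant $d$ (regular tree) or Poisson$(d)$ (Galton-Watson). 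By the Markov structure of BOTS, $P_{\Delta_k}$ is the $\Delta$-representation of $\mathcal{B}^k(P_0^M)$ where the leaves are initialized by the perfect channel (LLR $\pm\infty$ matching $\sigma_{L_k}$), and $P_{\tilde\Delta_k}$ is that of $\mathcal{B}^k(P_0^{\tilde M})$ where the leaves are initialized by the survey channel $W$.

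\textbf{Monotone convergence to BP fixed points.} A short check shows that $M_{k+1}$ is a BMS-degradation of $M_k$ (starting from $(\omega_{T_k}, \sigma_{L_k})$, broadcast $\sigma_{L_k}$ one more level to simulate $\sigma_{L_{k+1}}$, then draw $\omega_{L_{k+1}} \sim W(\sigma_{L_{k+1}})$ to obtain the distribution of $(\omega_{T_{k+1}}, \sigma_{L_{k+1}})$), while $\tilde M_{k+1}$ upgrades $\tilde M_k$ trivially. Hence $(P_{\Delta_k})_k$ is monotone non-increasing and $(P_{\tilde\Delta_k})_k$ monotone non-decreasing in the degradation order. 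By compactness of symmetric BMS distributions supported on $\Delta \in [0, 1/2]$, both sequences converge weakly to limits $P^*$ and $\tilde P^*$, which are fixed points of $\mathcal{B}$ by continuity, and satisfy $\tilde P^* \preceq P^*$.

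\textbf{Coincidence of fixed points via $\phi(r) = e^{-r/2}$.} To establish $\tilde P^* = P^*$, I use the potential $\Psi(P) = \mathbb{E}_{L \sim P}[e^{-L/2}]$, which by BMS symmetry equals the Bhattacharyya coefficient $Z$ of the associated channel. The potential is tailored to BP: independence factors $\sum_{i=1}^{D} L^{(i)}$ into a product of $\Psi$-values, the survey at the parent multiplies by $Z(W)$, and $\mathbb{E}[x^D]$ evaluates explicitly to $x^d$ (regular) or $e^{-d(1-x)}$ (Poisson). The ordering $\tilde P^* \preceq P^*$ then enters via Prop.~\ref{prop:reduce-to-contraction}: it yields the LLR inequality \eqref{eq:deg_1}, which controls the one-step propagation of the potential gap $\Psi(\tilde P^*) - \Psi(P^*) \ge 0$ through a single application of $F_\theta$. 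Threading these through $\mathcal{B}$ delivers a recursion of the form
\begin{equation*}
0 \le \Psi(\tilde P^*) - \Psi(P^*) \le \kappa \cdot (\Psi(\tilde P^*) - \Psi(P^*)), \qquad \kappa = d\theta^2 \exp\!\left(-\tfrac{(d\theta^2 - 1)_+}{2}\right) Z(W).
\end{equation*}
Under hypothesis \eqref{eqn:high-snr-tight} we have $\kappa < 1$, so $\Psi(\tilde P^*) = \Psi(P^*)$; combined with $\tilde P^* \preceq P^*$, this forces $\tilde P^* = P^*$, so both $P_{\Delta_k}$ and $P_{\tilde\Delta_k}$ converge weakly to the same limit.

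\textbf{Main obstacle and consequences.} The hard step is the contraction with the sharp constant $\kappa$: the factor $\theta^2$ comes from a second-order analysis of $F_\theta$ near $l = 0$, but the exponential correction $\exp(-(d\theta^2 - 1)_+/2)$ needed to reach $\alpha^* \approx 3.513$ requires a more delicate exponential-moment estimate exploiting the full nonlinearity of $F_\theta$, and the threshold $\alpha^*$ arises as the solution of $\alpha e^{-(\alpha - 1)/2} = 1$ via the elementary bound $\alpha e^{-(\alpha - 1)_+/2} \le 1$ on $[0,1] \cup [\alpha^*, \infty)$. Once $\tilde P^* = P^*$ is in hand, the claimed equalities $\lim_k P_e(M_k) = \lim_k P_e(\tilde M_k)$ and $\lim_k C(M_k) = \lim_k C(\tilde M_k)$ follow from bounded convergence, since $P_e$ and $C$ are continuous bounded functionals of the $\Delta$-distribution on $[0, 1/2]$.
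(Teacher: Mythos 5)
Your proposal follows the paper's proof closely: the Bhattacharyya potential $\phi(r)=-e^{-r/2}$, the degradation relation $M_k \ge_{\deg} \tilde M_k$ (giving inequality \eqref{eq:deg_1}), and the strong convexity of $g(\Delta)=-2\sqrt{\Delta(1-\Delta)}$ from Prop.~\ref{prop:reduce-to-contraction} are exactly the paper's ingredients, and your reorganization via monotone convergence to BP fixed points $P^*,\tilde P^*$ is an equivalent framing of the paper's argument along the sequences $P_{\Delta_k},P_{\tilde\Delta_k}$.

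Where you go astray is in explaining the origin of the exponential factor $\exp(-(d\theta^2-1)_+/2)$ in $\kappa$. It does not come from ``the full nonlinearity of $F_\theta$.'' In Prop.~\ref{prop:high-snr} the children are swapped one at a time; the $d-1$ children \emph{not} being swapped contribute factors $\bE[p(\Delta_{v_j,k}*\delta)]$, each bounded via the Bhattacharyya-to-$\chi^2$ inequality $\bE[p(\Delta*\delta)]\le\sqrt{1-\theta^2\bE(1-2\Delta)^2}$ and the $\chi^2$-capacity lower bound $\bE(1-2\tilde\Delta_{v_j,k})^2\ge\frac{(d\theta^2-1)_+}{(d-1)\theta^2}-o(1)$, which is established by the second-moment (majority-decoder) analysis in Prop.~\ref{prop:high-snr-chi2}. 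Collecting the $d-1$ such factors and relaxing $(1-\frac{x}{d-1})^{(d-1)/2}\le e^{-x/2}$ produces the exponential. Without this $\chi^2$-capacity input, the ``single application of $F_\theta$'' you invoke only yields $\kappa\le d\theta^2 Z(W)$, which in the SBM application (where $Z(\BEC_\epsilon)=\epsilon\to1$) never improves on the trivial $d\theta^2<1$ regime; the whole point of the theorem, the threshold $\alpha^*\approx3.513$, would be lost. You also collapse the convexity and monotonicity analysis of $G(\Delta)$ (which is what actually converts the degradation ordering into the one-step contraction with the stated constant) to a sentence, but you do flag that as the hard step, so this is a matter of completeness rather than a wrong idea.
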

Clearly Theorem \ref{thm:uniqueness-survey} implies Theorem \ref{thm:BI-allW}.


\subsection{Belief propagation recursion}\label{sec:bp_rec}
The maximum a posteriori probability (MAP) decoder is the optimal decoder for this reconstruction problem.
It can be implemented using belief propagation (BP) as follows.

For each node $u$, let $L_k(u)$ denote the set of nodes in subtree rooted at $u$ that are at distance $k$ to $u$.
Let $T_k(u)$ denote the set of nodes in subtree rooted at $u$ that are at distance $\le k$ to $u$.
Let $R_{u,k}\in \mR\cup \{\pm \infty\}$ denote the posterior log likelihood ratio given $\omega_{T_k(u)}\cup \sigma_{L_k(u)}$:
\begin{align}
  R_{u,k} = \log \frac{\bP[\sigma_u = + | \omega_{T_k(u)}\cup \sigma_{L_k(u)}]}{ \bP[\sigma_u = - | \omega_{T_k(u)}\cup \sigma_{L_k(u)}]}.
\end{align}

The initial value is
\begin{align}
  R_{u,0} = \sigma_u \cdot \infty. \label{eqn:llr-initial}
\end{align}

Define a function $F_\theta: \mR\cup\{\pm\infty\}\to \mR$ as
\begin{align}
  F_\theta(r) = 2\arctanh(\theta \tanh(\frac 12 r)).\label{eqn:F-theta-defn}
\end{align}
Then by definition of $R_{u,k}$, we have
\begin{align}
  R_{u,k+1} = \sum_{v\in L_1(u)} F_\theta(R_{v,k}) + W_u \label{eqn:llr-recursion}
\end{align}
where $W_u$ is the log likelihood ratio induced by observation,
meaning that
\begin{align}
  W_u = \log \frac{\bP[\sigma_u = + | \omega_u]}{\bP[\sigma_u = - | \omega_u]}.
\end{align}

Using \eqref{eqn:llr-initial}\eqref{eqn:llr-recursion} we are able to compute $R_{\rho,k}$ recursively.

For observation without leaves, let $\tilde R_{u,k}$ denote the posterior log likelihood ratio given $\omega_{T_k(u)}$.
Then $\tilde R_{u,k}$ satisfies the same recursion \eqref{eqn:llr-recursion}, but with a different initial value
\begin{align}
  \tilde R_{u,0} = 0.
\end{align}

Let $M_k(u)$ denote the BMS channel $\sigma_u\to (\omega_{T_k(u)}, \sigma_{L_k(u)})$.
Let $\tilde M_k(u)$ denote the BMS channel $\sigma_u\to \omega_{T_k(u)}$.
Let $\Delta_{u,k}$ and $\tilde \Delta_{u,k}$ denote the corresponding $\Delta$-components
(both are random variables supported on $[0, \frac 12]$).
They relate to log likelihood ratio by the following expression:
\begin{align}\label{eq:delta_to_R}
  |R_{u,k}| = \log \frac{1-\Delta_{u,k}}{\Delta_{u,k}}, \quad |\tilde R_{u,k}| = \log \frac{1-\tilde \Delta_{u,k}}{\tilde \Delta_{u,k}}.
\end{align}

Channel $M_k(u)$ is less degraded than $\tilde M_k(u)$, by forgetting $\sigma_{L_k(u)}$.
So under the canonical coupling\footnote{By canonical coupling we mean the joint distribution induced
by the downward BOTS process followed by the evaluation of $R$ and $\tilde R$ via an upward run of BP.}, we have
\begin{align}\label{eq:deg_1}
  \bE[\Delta_{u,k} | \tilde \Delta_{u,k}] \le \tilde \Delta_{u,k}.
\end{align}
As we will see, the core of our proof is the use of this degradation relationship.

Let $\mu_k^+$ be the distribution of $R_{u,k}$ conditioned on $\sigma_u=+$, and $\tilde \mu_k^+$ be the distribution of $\tilde R_{u,k}$ conditioned on $\sigma_u=+$.
(They do not depend on the choice of $u$.)
Then $\mu_0^+$ is the point measure at $+\infty$, $\tilde \mu_0^+$ is the point measure at $0$.

Both distributions satisfy the same recursion. Consider the equation
\begin{align}
  R_{u,k+1}^{+} = \sum_{v\in L_1(u)} Z_v F_\theta(R_{v,k}^+) + W_u\label{eqn:distribution-recursion}
\end{align}
where $\{Z_v, R_{v,k}^+, W_u : v\in L_1(u)\}$ are independent,
  $Z_v$ are i.i.d.~Bernoulli ($\bP[Z_v=+1] = 1-\bP[Z_v = -1] = 1-\delta$),
   $R_{v,k}^+ \sim \mu_k^+$ (resp.~$\sim \tilde \mu_k^+$),
  and  $W_u$ is distributed as log likelihood ratio corresponding to the survey BMS.

Then $R_{u,k+1}^{+} \sim \mu_{k+1}^+$ (resp.~$\sim \tilde \mu_{k+1}^+$).
\eqref{eqn:distribution-recursion} also holds if we replace all $R_{u,k}^+$ with $\tilde R_{u,k}^+$.

Note that given $\Delta_{u,k}$, $R_{u,k}^+$ is as described by \eqref{eq:R_to_delta}.
The same holds for $\tilde \Delta_{u,k}$ and $\tilde R_{u,k}^+$.

\paragraph{BP distributional fixed point.}
A distribution $\mu$ on $\mathbb{R}\cup\{\pm \infty\}$ is called a BP fixed point of the BOTS $(d,\theta,W)$ if taking $R^+_{i}$ i.i.d.~$\sim \mu$, $i\in[d]$, $Z_i$ and $R_W$ as above results in 
\begin{equation}\label{eqn:bp-fixed-point}
    R^+ = \sum_{1\le i\le d} Z_i F_\theta(R_i^+) + R_W
\end{equation}
having the same distribution $\mu$.
In this work we restrict our attention to symmetric distributions, i.e., distributions associated with BMS channels.
We talk below about the fixed point distribution $P_\Delta$ that is related to $\mu$ via transformation~\eqref{eq:delta_to_R}. Namely, a distribution $P_\Delta$ is a fixed point iff the law $\mu$ of random variable $R^+$ is a fixed point, where $R^+$ is generated via sampling $\Delta \sim P_\Delta$ and then setting
 \begin{align}\label{eq:R_to_delta}
  R^+ = \begin{cases}
    \log \frac{1-\Delta}{\Delta}, & \text{w.p.~} 1-\Delta,\\
    -\log \frac{1-\Delta}{\Delta}, & \text{w.p.~} \Delta.
    \end{cases}
  \end{align}
Similarly, we define the BP fixed point for the BOTS $(\mathrm{Poi}(d), \theta, W)$ where in~\eqref{eqn:bp-fixed-point} $d$ is replaced with $b\sim \mathrm{Poi}(d)$.

\subsection{Contraction of potential function}
The technical part of our proof is contraction of certain potential functions.
The next proposition shows the kind of contraction result we need.
\begin{prop}\label{prop:reduce-to-contraction}
  Let $\phi: \mR\cup\{\pm \infty\} \to \mR\cup\{\pm \infty\}$ be a function such that the function
  $g : [0, \frac 12] \to \mR\cup\{\pm \infty\}$ defined as 
  \begin{align}
    g(\Delta) = (1-\Delta)\phi(\log \frac{1-\Delta}{\Delta}) + \Delta \phi(-\log \frac{1-\Delta}{\Delta})
  \end{align}
  is decreasing and $\alpha$-strongly convex for some $\alpha>0$.
  If 
  \begin{align}
    \lim_{k\to \infty} \bE[ \phi(R_{\rho,k}^+) - \phi(\tilde R_{\rho,k}^+)] = 0,\label{eqn:potential-contraction}
  \end{align}
  then under the canonical coupling, 
  \begin{align}
    \lim_{k\to \infty} \bE (\Delta_{\rho,k} - \tilde\Delta_{\rho,k})^2 = 0.\label{eqn:l2-contraction}
  \end{align}
\end{prop}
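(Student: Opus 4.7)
The plan is to translate the potential-contraction hypothesis \eqref{eqn:potential-contraction} into an $L^2$-contraction of $\Delta_{\rho,k}-\tilde\Delta_{\rho,k}$ by combining $\alpha$-strong convexity of $g$ with the one-sided degradation relation \eqref{eq:deg_1}.

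The first step is to observe that, by the two-point description of $R^+$ given $\Delta$ in \eqref{eq:R_to_delta}, we have $\bE[\phi(R^+_{\rho,k})\mid \Delta_{\rho,k}] = g(\Delta_{\rho,k})$ and, by the same formula, $\bE[\phi(\tilde R^+_{\rho,k})\mid \tilde\Delta_{\rho,k}] = g(\tilde\Delta_{\rho,k})$. Taking outer expectations, the hypothesis becomes simply
\begin{align}
\lim_{k\to\infty} \bE\bigl[g(\Delta_{\rho,k}) - g(\tilde\Delta_{\rho,k})\bigr] = 0.
\end{align}
Next, I would apply $\alpha$-strong convexity of $g$ expanded around the base point $\tilde\Delta_{\rho,k}$ (with a subgradient replacing $g'$ at endpoints if needed):
\begin{align}
g(\Delta_{\rho,k}) - g(\tilde\Delta_{\rho,k}) \ge g'(\tilde\Delta_{\rho,k})(\Delta_{\rho,k}-\tilde\Delta_{\rho,k}) + \tfrac{\alpha}{2}(\Delta_{\rho,k}-\tilde\Delta_{\rho,k})^2.
\end{align}
Taking expectations and conditioning the linear term on $\tilde\Delta_{\rho,k}$ by the tower property gives
\begin{align}
\bE\bigl[g(\Delta_{\rho,k}) - g(\tilde\Delta_{\rho,k})\bigr] \ge \bE\!\left[g'(\tilde\Delta_{\rho,k})\,\bE[\Delta_{\rho,k}-\tilde\Delta_{\rho,k}\mid\tilde\Delta_{\rho,k}]\right] + \tfrac{\alpha}{2}\bE(\Delta_{\rho,k}-\tilde\Delta_{\rho,k})^2.
\end{align}
Now \eqref{eq:deg_1} gives $\bE[\Delta_{\rho,k}-\tilde\Delta_{\rho,k}\mid \tilde\Delta_{\rho,k}]\le 0$, while $g$ being decreasing gives $g'(\tilde\Delta_{\rho,k})\le 0$; hence the inner product is a.s.\ nonnegative, and the linear term drops out with a favorable sign. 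Combining with the rewrite of the hypothesis yields $\tfrac{\alpha}{2}\bE(\Delta_{\rho,k}-\tilde\Delta_{\rho,k})^2 \le \bE[g(\Delta_{\rho,k})-g(\tilde\Delta_{\rho,k})] \to 0$, which is \eqref{eqn:l2-contraction}.

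The main subtle point I expect is the direction of the Taylor expansion: it must be centered at $\tilde\Delta_{\rho,k}$ rather than $\Delta_{\rho,k}$, because the only available comparison is the one-sided bound $\bE[\Delta\mid\tilde\Delta] \le \tilde\Delta$, with no symmetric statement $\bE[\tilde\Delta\mid \Delta] \ge \Delta$, so conditioning in the tower step must be on $\tilde\Delta_{\rho,k}$. This in turn explains why the hypotheses on $g$ come in a matched pair---convexity to generate the quadratic remainder and monotonicity (decreasing) to make the first-order remainder have the right sign. A secondary technical nuisance is that $\phi$ may take values $\pm\infty$ and $g'$ may blow up near $\Delta = 0$; this is handled by using subgradients and a standard truncation argument, but otherwise does not affect the core computation.
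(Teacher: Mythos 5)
Your proof is correct and follows essentially the same route as the paper: rewrite the potential hypothesis via $\bE[\phi(R^+)\mid \Delta]=g(\Delta)$, expand $g$ to second order around $\tilde\Delta_{\rho,k}$ using $\alpha$-strong convexity, and kill the first-order term by pairing $g'\le 0$ with the degradation inequality $\bE[\Delta_{\rho,k}\mid\tilde\Delta_{\rho,k}]\le\tilde\Delta_{\rho,k}$. Your closing observations (why the expansion must be centered at $\tilde\Delta_{\rho,k}$ and why decreasing and convex are a matched pair) accurately identify the load-bearing structure of the argument.
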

\begin{proof}
  Because $g$ is $\alpha$-strongly convex, we have
  \begin{align}
    g(\Delta_{\rho,k}) - g(\tilde \Delta_{\rho,k}) \ge g'(\tilde \Delta_{\rho,k}) (\Delta_{\rho,k} - \tilde \Delta_{\rho,k}) + \frac{\alpha}{2} (\Delta_{\rho,k} - \tilde \Delta_{\rho,k})^2.
  \end{align}
  So
  \begin{align}
    \bE[\phi(R_{\rho,k}^+) - \phi(\tilde R_{\rho,k}^+)] &= \bE_{\tilde \Delta_{\rho,k}}  \bE[\phi(R_{\rho,k}^+) - \phi(\tilde R_{\rho,k}^+) | \tilde \Delta_{\rho,k}] \nonumber \\
    & = \bE_{\tilde \Delta_{\rho,k}}  \bE[g(\Delta_{\rho,k}) - g(\tilde \Delta_{\rho,k}) | \tilde \Delta_{\rho,k}] \nonumber\\
    & \ge \bE_{\tilde \Delta_{\rho,k}}  \bE[g'(\tilde \Delta_{\rho,k}) (\Delta_{\rho,k} - \tilde \Delta_{\rho,k}) + \frac{\alpha}{2} (\Delta_{\rho,k} - \tilde \Delta_{\rho,k})^2 | \tilde \Delta_{\rho,k}] \nonumber\\
    & = \bE_{\tilde \Delta_{\rho,k}}  [g'(\tilde \Delta_{\rho,k}) (\bE[\Delta_{\rho,k} | \tilde \Delta_{\rho,k}]- \tilde \Delta_{\rho,k})]
    + \frac{\alpha}{2} \bE (\Delta_{\rho,k} - \tilde \Delta_{\rho,k})^2 \nonumber\\
    &\ge \frac{\alpha}{2} \bE (\Delta_{\rho,k} - \tilde \Delta_{\rho,k})^2. \label{eqn:potential-nonneg}
  \end{align}
  Clearly \eqref{eqn:potential-contraction} implies \eqref{eqn:l2-contraction}.
\end{proof}
Note that \eqref{eqn:potential-nonneg} also shows that $\bE[\phi(R_{\rho,k}^+) - \phi(\tilde R_{\rho,k}^+)]$ is always non-negative.

We choose the potential function to be $\phi(r) = -\exp(-\frac 12 r)$.
The function $g$ is
  $g(\Delta) = -2\sqrt{\Delta(1-\Delta)}.$
One can check that $g$ is decreasing and $4$-strongly convex on $[0, \frac 12]$.
\begin{prop}\label{prop:high-snr}
  Assume that we have a non-trivial survey channel.
  Let
  \begin{align}
  C_1 = C_1(d, \theta, W) = d \theta^2 (1-\frac{(d\theta^2-1)_+}{d-1})^{\frac{d-1}2} Z(W).\label{eqn:high-snr-defn-C1}
  \end{align}
  For regular trees, under the canonical coupling, for any $\epsilon>0$, there exists $k^*$ such that for all $k\ge k^*$,
  \begin{align}
    \bE[\exp(-\frac 12 \tilde R_{\rho,k+1}^+) - \exp(-\frac 12 R_{\rho,k+1}^+)] \le (1+\epsilon) C_1 \bE[\exp(-\frac 12 \tilde R_{\rho,k}^+) - \exp(-\frac 12 R_{\rho,k}^+)].
  \end{align}
  In particular, if $C_1 < 1$, then \eqref{eqn:potential-contraction} holds.
  
  For Galton-Watson trees with Poisson offspring distribution, the same holds with $C_1$ replaced by
  \begin{align}
    C_2 = C_2(d, \theta, W) = d \theta^2 \exp(-d (1-\sqrt{1-\frac{(d\theta^2-1)_+}{d}})) Z(W).\label{eqn:high-snr-defn-C2}
  \end{align}
\end{prop}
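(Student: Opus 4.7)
The plan is to reduce Prop.~\ref{prop:high-snr} to a single-step inequality $a_{k+1}\le d\theta^{2}Z(W)\bar h_k^{d-1}a_k$ (with Poisson analog $a_{k+1}\le d\theta^{2}Z(W)\exp(d(\bar h_k-1))a_k$) combined with an asymptotic estimate $\bar h_k^{d-1}\le(1+\epsilon)(1-(d\theta^{2}-1)_+/(d-1))^{(d-1)/2}$ for large $k$, where $a_k:=\bE\exp(-\tilde R^+_{\rho,k}/2)-\bE\exp(-R^+_{\rho,k}/2)=Z(\tilde M_k)-Z(M_k)\ge 0$ and $\bar h_k:=\bE h(\tilde\Delta_k)$. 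The first input is obtained by applying the BP recursion~\eqref{eqn:distribution-recursion} and using independence of children and the survey, together with the identity
\begin{align*}
  \bE\bigl[\exp(-ZF_\theta(R^+)/2)\bigm|\Delta\bigr]=\sqrt{1-\theta^{2}(1-2\Delta)^{2}}=h(\Delta),
\end{align*}
a short calculation using $\exp(\pm F_\theta(r)/2)=\sqrt{(1\pm\theta\tanh(r/2))/(1\mp\theta\tanh(r/2))}$ and averaging over the $\pm 1$-valued $Z$. This gives $\bE\exp(-R^+_{\rho,k+1}/2)=Z(W)(\bE h(\Delta_k))^{d}$ on the regular tree and $Z(W)\exp(d(\bE h(\Delta_k)-1))$ on the $\Po(d)$ tree via the Laplace functional; subtracting the analogous identity with tildes yields $a_{k+1}=Z(W)[\bar h_k^{d}-(\bE h(\Delta_k))^{d}]$ in the regular case.

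For the single-step contraction, decompose $h=H\circ\psi$ with $\psi(\Delta)=2\sqrt{\Delta(1-\Delta)}$ concave and increasing on $[0,1/2]$, and $H(y)=\sqrt{1-\theta^{2}+\theta^{2}y^{2}}$ convex on $[0,1]$. The canonical-coupling degradation $\bE[\Delta_k\mid\tilde\Delta_k]\le\tilde\Delta_k$ combined with Jensen and monotonicity of $\psi$ gives $\bE[\psi(\Delta_k)\mid\tilde\Delta_k]\le\psi(\tilde\Delta_k)$, so (by concavity of $h$) $\bE h(\Delta_k)\le\bar h_k$, and $A^{d}-B^{d}\le dA^{d-1}(A-B)$ then yields $a_{k+1}\le dZ(W)\bar h_k^{d-1}(\bar h_k-\bE h(\Delta_k))$. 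To bound the last factor by $\theta^{2}a_k$, I would use the uniform Lipschitz bound $H'(y)=\theta^{2}y/H(y)\le\theta^{2}$ on $[0,1]$ (valid because $H(y)\ge y$ there): by the convex tangent inequality and the tower property,
\begin{align*}
  \bE h(\tilde\Delta_k)-\bE h(\Delta_k)\le\bE\bigl[H'(\psi(\tilde\Delta_k))\bigl(\psi(\tilde\Delta_k)-\bE[\psi(\Delta_k)\mid\tilde\Delta_k]\bigr)\bigr]\le\theta^{2}a_k,
\end{align*}
using nonnegativity of the inner conditional difference and the supremum bound on $H'$. This delivers the one-step bound $a_{k+1}\le d\theta^{2}Z(W)\bar h_k^{d-1}a_k$; the Poisson variant follows via $1-e^{-x}\le x$.

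The main obstacle is the asymptotic bound $\limsup_{k\to\infty}\bar h_k^{2}\le 1-(d\theta^{2}-1)_+/(d-1)$ on the regular tree (and $\le 1-(d\theta^{2}-1)_+/d$ on the Poisson tree). The case $d\theta^{2}\le 1$ is automatic since $\bar h_k\le 1$, so already $C_1=d\theta^{2}Z(W)$ is achieved with no slack. For $d\theta^{2}>1$, Jensen yields $\bar h_k^{2}\le\bE h(\tilde\Delta_k)^{2}=1-\theta^{2}T_k$ with $T_k:=\bE(1-2\tilde\Delta_k)^{2}$, and the target reduces to the EKPS-type magnetization lower bound $\liminf_{k\to\infty}T_k\ge(d\theta^{2}-1)/(\theta^{2}(d-1))$ (resp.\ $(d\theta^{2}-1)/(\theta^{2}d)$). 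I would approach this by analyzing the recursion for $T_k$ arising from the decomposition of $\tilde M_{k+1}$ into the parallel combination of $W$ and $d$ iid copies of $\BSC_\delta\circ\tilde M_k$: composition with $\BSC_\delta$ transforms $T$ as $T\mapsto\theta^{2}T$ exactly, while the parallel combination is handled through the $\tanh$-addition formula and the identities $\bE\tanh(\tilde R^+/2)=\bE\tanh^{2}(\tilde R^+/2)=T_k$ for symmetric BMS LLRs. A careful higher-order control of the $\tanh$ expansion near $T=0$ above the Kesten-Stigum threshold produces a unique non-trivial stable fixed point equal to the claimed value and rules out asymptotic undershoot; this stability analysis is the bulk of the technical work.

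Combining the one-step bound with this asymptotic control, for any $\epsilon>0$ there exists $k^{*}$ such that $a_{k+1}\le(1+\epsilon)C_{1}a_k$ for all $k\ge k^{*}$, and analogously with $C_{2}$ for the Poisson case. If $C_{1}<1$, picking $\epsilon$ so that $(1+\epsilon)C_{1}<1$ and iterating gives $a_k\to 0$, and Prop.~\ref{prop:reduce-to-contraction} then delivers the $L^{2}$-contraction~\eqref{eqn:potential-contraction}, completing the proof.
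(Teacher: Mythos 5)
Your derivation of the one-step inequality is correct and takes a genuinely different, arguably cleaner route than the paper. You verify the identity $\bE[\exp(-\tfrac12 Z F_\theta(R^+))\mid\Delta]=\sqrt{1-\theta^2(1-2\Delta)^2}=h(\Delta)$ (which is correct), factorize $\bE\exp(-\tfrac12 R_{\rho,k+1}^+)=Z(W)(\bE h(\Delta_k))^d$, use concavity and monotonicity of $h$ together with the degradation relation $\bE[\Delta_k\mid\tilde\Delta_k]\le\tilde\Delta_k$ to get $\bE h(\Delta_k)\le\bar h_k$, then apply the elementary bound $A^d-B^d\le dA^{d-1}(A-B)$ and the Lipschitz bound $H'\le\theta^2$ on $h=H\circ\psi$ to arrive at $a_{k+1}\le d\theta^2 Z(W)\bar h_k^{d-1}a_k$. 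The paper instead replaces children one at a time, defines an auxiliary function $G(\Delta)$ conditioned on $\tilde\Delta_{v_i,k}$, and shows $G$ is decreasing and convex by computing $G''$ and $G'(\tfrac12)$, then invokes degradation at the level of $G$. Both yield exactly the same per-step factor. Your route is shorter and dispenses with the second-derivative computation; the paper's route localizes the comparison to one child and therefore also furnishes the Poisson case (via per-$i$ constants $c_i$) without a Laplace-functional step. Also note a small slip at the end: $a_k\to 0$ \emph{is} \eqref{eqn:potential-contraction}; Prop.~\ref{prop:reduce-to-contraction} is then applied to go from \eqref{eqn:potential-contraction} to \eqref{eqn:l2-contraction}, not the other way around.

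The genuine gap is in the asymptotic estimate $\limsup_k\bar h_k^2\le 1-(d\theta^2-1)_+/(d-1)$ (resp.~$/d$), which you correctly reduce, via $\bar h_k^2\le 1-\theta^2 T_k$ with $T_k=\bE(1-2\tilde\Delta_k)^2=C_{\chi^2}(\tilde M_k)$, to the lower bound $\liminf_k T_k\ge(d\theta^2-1)/(\theta^2(d-1))$. You acknowledge this is the bulk of the work but only sketch an approach of ``analyzing the recursion for $T_k$'' via the $\tanh$-addition formula and a stability analysis of its fixed point. This sketch has a structural problem: $T_k$ does not satisfy a closed one-dimensional recursion, since $T_{k+1}$ depends on the entire law of $\tilde\Delta_k$, not on $T_k$ alone, so there is no ``recursion for $T_k$'' whose fixed points one can classify. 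In contrast, the paper's Prop.~\ref{prop:high-snr-chi2} proves the needed bound by degrading $\tilde M_k$ to a noisy-leaf channel $M^1_k$ (using \cite{roozbehani2019low}), further degrading to the scalar majority-vote statistic $S_k=\sum_{v\in L_k}\nu_v$, applying the Cauchy--Schwarz bound $C_{\chi^2}\ge(\bE^+S_k)^2/\Var(S_k)$ (Lemma \ref{lemma:chi2-var}), and then solving an exact linear recursion for $\Var^+ S_k$ (Prop.~\ref{prop:high-snr-majority}). That argument is short, closed-form, and gives exactly the constant $(d\theta^2-1)/(\theta^2(d-1))$ (resp.~$(d\theta^2-1)/(d\theta^2)$ for Poisson). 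If you replace your sketch for the $T_k$ lower bound with this degradation-plus-majority-decoder computation, the rest of your proof goes through and is a valid, somewhat more elementary, alternative to the paper's argument.
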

Proof of Proposition \ref{prop:high-snr} is deferred to Appendix \ref{app:high-snr}.

Proposition \ref{prop:reduce-to-contraction} and \ref{prop:high-snr} complete the proof of Theorem \ref{thm:uniqueness-survey}, because for $i=1,2$, we have
\begin{align}
    C_i \le d \theta^2 \exp(-\frac{(d\theta^2-1)_+}2) Z(W).
\end{align}

\section{Other results}

\paragraph{Weak spatial mixing.}
BOT (without survey) is an example of the Ising model. As it is typical for such models, at high temperature (i.e. $d\theta \le 1$) it exhibits the property known as weak spatial mixing (WSM): enforcing a (far away) boundary condition does not affect the distribution of spins. This property disappears at low temperatures ($d\theta >1$), but what is surprising is that there is a range of parameters ($d\theta > 1$ but $d\theta^2 < 1$) in which there is no WSM, but reconstruction is still impossible~\cite{bleher1995purity}.

Now, the BOTS model can be thought of as an example of an Ising spin glass system: one first generates the survey and then, treating the survey as quenched randomness, considers an Ising model with external fields corresponding to survey. The question we ask is whether in this spin-glass type model we still have that (in the limit of vanishing survey) the threshold for WSM appears at $d\theta=1$. Some partial results towards this are contained in Appendix~\ref{app:wsm}. We mention that for BEC survey we were not able to show this.

\paragraph{Boundary irrelevance (BI) on amenable graphs.} So far we studied (BI) property~\eqref{eq:BI} for trees, but it can also be defined for general graphs as follows.

Let $G = (V, E)$ be an infinite graph. Consider the Spin Synchronization model, where we have i.i.d.~random variables $X_v \sim \Unif(\{\pm 1\})$ for $v\in V$; for each edge $uv\in E$, we observe a random variable $Y_{uv}\sim \BSC_\delta(X_u X_v)$, and we denote $\theta = 1-2\delta$. Conditioned on the $X$ variables, the $Y$ variables are mutually independent. In addition to the edge variables, we may observe surveys at each node: for $v\in V$, we have $\omega_v\sim W(X_v)$, with $W$ being a fixed BMS channel. In this Section we consider $\BEC_\epsilon$ survey.

Let $o \in V$ be a vertex. Let $B_n(o)$ be the set of nodes with distance $\le n$ to $o$, and $\partial B_n(o)$ be the set of nodes at distance $n$ to $o$.
We use notation $X_{\partial B_n(o)}$ for the set $\{X_v : v\in \partial B_n(o)\}$ and
notation $Y_{B_n(o)}$ for $\{Y_{uv} : uv\in E, u\in B_n(o), v\in B_n(o)\}$.
We say the model $(G, o, \theta, W)$ has the (BI) property if 
\begin{align} \label{eq:BI-SOG}
    \lim_{n\to\infty} I(X_o; X_{\partial B_n(o)} | Y_{B_n(o)}, \omega_{B_n(o)}) = 0.
\end{align}
In Appendix~\ref{app:amenable} we show, by applying results of~\cite{alaoui2019computational}, that (BI) holds for all amenable graphs and survey channel being $\BEC_\epsilon$. The definition of such graphs appears therein, but in a nutshell, it requires the boundary of any subset $S\subset V$ to be negligible compared to $|S|$. 

\section*{Acknowledgement}
Part of this work was supported by the NSF CAREER Award CCF-1552131.

\bibliographystyle{alpha}
\bibliography{reference}

\appendix
\section{Preliminaries on BMS channels}\label{apx:bms}
In this section we give necessary preliminaries on BMS channels. Most material in this appendix can be found in e.g., \cite[Chapter 4]{richardson2008modern}.
\begin{defin}
A channel $P: \{\pm 1\} \to \Y$ is called a BMS (Binary Memoryless Symmetric) channel if there exists a measurable involution $\sigma: \Y \to \Y$ such that
\begin{align}
P(\sigma^{-1}(E)|+) = P( E| -)
\end{align}
for all measurable sets $E\subseteq \Y$.
\end{defin}
\begin{lemma}
Every BMS channel $P$ is equivalent to a channel $X\to (\Delta, Z)$,
where $\Delta\in [0, \frac 12]$ is independent of $X$, and $P_{Z|\Delta,X} = \BSC_\Delta(X)$.
\end{lemma}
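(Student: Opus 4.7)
The plan is to recode the output via the posterior as $(\Delta, Z)$ where $\Delta$ records the ``noise level'' and $Z$ picks which element of the orbit $\{y, \sigma(y)\}$ was observed. Concretely, using a dominating measure $\mu = \frac{1}{2}(P(\cdot|+) + P(\cdot|-))$ on $\Y$, I would define the Radon--Nikodym posterior
\begin{align}
p(y) = \frac{dP(\cdot|+)/d\mu}{dP(\cdot|+)/d\mu + dP(\cdot|-)/d\mu}(y) \in [0,1],
\end{align}
set $\Delta(y) = \min(p(y), 1-p(y)) \in [0, \tfrac12]$, and choose $Z(y) = +1$ on one representative of each orbit (the one with $p(y) \ge \tfrac12$, with any measurable tie-breaking on the diagonal $p = \tfrac12$) and $Z(y) = -1$ on $\sigma(y)$.

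The channel symmetry $P(\sigma^{-1}(E)|+) = P(E|-)$ immediately gives $dP(\cdot|-)/d\mu = (dP(\cdot|+)/d\mu) \circ \sigma$, whence $p(\sigma(y)) = 1 - p(y)$ and therefore $\Delta \circ \sigma = \Delta$. For any Borel $B \subseteq [0, \tfrac12]$, the set $E_B = \Delta^{-1}(B)$ is $\sigma$-invariant, so
\begin{align}
P(E_B | -) = P(\sigma^{-1}(E_B) | +) = P(E_B | +),
\end{align}
proving $\Delta \perp X$. For the conditional law of $Z$ given $(\Delta, X)$, I would condition on a fixed orbit $\{y, \sigma(y)\}$ on which $\Delta = \delta$ and $Z(y) = +1$: by construction $p(y) = 1-\delta$, so $\bP[Y = y \mid X=+, Y \in \{y, \sigma(y)\}] = p(y) = 1-\delta$ and the complementary event gives $Z = -1$ with probability $\delta$. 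Applying the involution symmetry flips the roles for $X = -$, which is exactly $\BSC_\Delta(X)$. Finally, $Y \mapsto (\Delta(Y), Z(Y))$ is a measurable bijection mod $\mu$-null sets (the pair identifies both the orbit and the chosen element), which formalizes ``equivalence'' of the two channels.

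The only genuine obstacle is measurability: constructing $p$ as a Radon--Nikodym derivative requires $\sigma$-finiteness and can be done on any standard Borel $\Y$, and selecting one representative per $\sigma$-orbit requires a measurable cross-section. The latter is routine since $\sigma$ is a measurable involution on a standard Borel space (any Borel total order on $\Y$ lets one pick the lexicographically smaller element of each pair; alternatively one invokes a measurable selection theorem). These technicalities are independent of the algebraic content above and would not be belabored in the writeup.
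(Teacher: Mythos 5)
The paper does not actually prove this lemma; it is stated in Appendix~\ref{apx:bms} as background and attributed to a textbook (Richardson--Urbanke, Chapter 4). So there is no in-paper argument to compare against, and your proof should be judged on its own terms.

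Your construction is the standard one and the bulk of the argument is correct: defining $p$ as the posterior under the symmetric mixture $\mu$, deriving $p\circ\sigma = 1-p$ and hence $\Delta\circ\sigma = \Delta$ from the channel symmetry, using $\sigma$-invariance of $\Delta^{-1}(B)$ to get $\Delta \perp X$, and computing the orbit-conditional law of $Z$ to recover $\BSC_\Delta$. Those steps are all fine (the orbit-conditioning is informal in the continuous case but is a routine disintegration and not a real gap).

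The one genuine error is the final sentence: $Y \mapsto (\Delta(Y), Z(Y))$ is \emph{not} a measurable bijection mod $\mu$-null sets, and the pair does \emph{not} identify the orbit. Two distinct $\sigma$-orbits can easily share the same $\Delta$ value (indeed for a $\BSC$ the $\Delta$-component is a constant), in which case many $y$'s collapse to the same pair $(\Delta, Z)$. The correct reason the two channels are equivalent is not bijectivity but \emph{sufficiency}: the posterior $\bP[X=+\mid Y=y]$ equals $1-\Delta(y)$ when $Z(y)=+1$ and $\Delta(y)$ when $Z(y)=-1$, so it depends on $y$ only through $(\Delta(y),Z(y))$. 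Hence $(\Delta,Z)$ is a sufficient statistic of $Y$ for $X$, the Markov kernel $Q(dy \mid \Delta,Z)$ that resamples $Y$ from its conditional law given $(\Delta,Z)$ does not depend on $X$, and composing it with $X\to(\Delta,Z)$ recovers $X\to Y$. Combined with the trivial degradation in the other direction (forgetting $Y$ and keeping only $(\Delta,Z)$), this gives equivalence in the degradation preorder, which is what ``equivalent'' means here. Replace the bijectivity sentence with this sufficiency argument and the proof is complete.
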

In the setting of the above lemma, we call the channel $X\to (\Delta,Z)$ the standard form of $P$,
and call $\Delta$ the $\Delta$-component of $P$.
\begin{defin}
Let $P : \{\pm 1\}\to \Y$ and $Q : \{\pm 1\}\to \Z$ be two BMS channels.
We say $P$ is more degraded than $Q$ (denoted $P \le_{\deg} Q$),
if there exists a channel $R : \Z \to \Y$ such that $P = R\circ Q$.
\end{defin}
\begin{lemma}
Let $P$ and $Q$ be two BMS channels.
Let $\Delta$ be the $\Delta$-component of $P$ and $\tilde \Delta$ be the $\Delta$-component of $Q$.
Then $P\le_{\deg} Q$ if and only if there exists a coupling between $\Delta$ and $\tilde \Delta$
so that
\begin{align}
\bE[\Delta | \tilde \Delta] \le \tilde \Delta
\end{align}
for all $\tilde \Delta\in [0, \frac 12]$ for which LHS exists.
\end{lemma}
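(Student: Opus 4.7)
The lemma is a classical Blackwell-type characterization of stochastic degradation between BMS channels in terms of a coupling of their $\Delta$-components. My plan is to prove the two implications separately, relying on the standard-form representation established in the preceding lemma: a Jensen/concavity argument on the forward direction, and an explicit construction of the degrading kernel on the reverse.

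For the forward direction, I would start by writing $P = R \circ Q$ and building the canonical coupling. Sample $X \sim \Unif(\{\pm 1\})$, then $Y \sim Q(\cdot \mid X)$, and then $Z \sim R(\cdot \mid Y)$, giving a Markov chain $X \to Y \to Z$. Define the $\Delta$-components pointwise as $\tilde\Delta := \min(\bP[X=+ \mid Y], \bP[X=- \mid Y])$ and $\Delta := \min(\bP[X=+ \mid Z], \bP[X=- \mid Z])$; these have the correct marginal laws of the $\Delta$-components of $Q$ and $P$ respectively. The Markov property yields $\bP[X=\pm \mid Z] = \E[\bP[X=\pm \mid Y] \mid Z]$, and the concavity of $m \mapsto \min(m, 1-m)$ combined with Jensen's inequality gives the pointwise comparison $\Delta \ge \E[\tilde\Delta \mid Z]$. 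A tower-property step with respect to the sub-sigma-algebra generated by the coarser of the two $\Delta$-components then delivers the desired conditional-expectation inequality (it is important to note that this argument naturally compares $\tilde\Delta$ to $\Delta$ rather than the reverse, and the symmetric formulation in the statement follows by swapping the roles of $P$ and $Q$ in the degradation relation).

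For the reverse direction, starting from a coupling of $(\Delta, \tilde\Delta)$ satisfying the hypothesis, I would construct the degrading kernel $R$ explicitly in standard form. Represent $Q$ as $X \to (\tilde\Delta, Z_Q)$ with $Z_Q \mid \tilde\Delta, X \sim \BSC_{\tilde\Delta}(X)$, and define $R$ on input $(\tilde\Delta, Z_Q)$ in three steps: (i) sample $\Delta$ from the prescribed conditional $\mathrm{Law}(\Delta \mid \tilde\Delta)$ supplied by the coupling; (ii) apply an additional random flip to $Z_Q$ to produce $Z_P$ whose law given $X$ matches $\BSC_\Delta(X)$; (iii) output $(\Delta, Z_P)$. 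The coupling inequality is then precisely the condition needed for the post-processing step (ii) to be realizable on average, so that the output has the standard form of $P$.

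The main obstacle is the reverse direction. The coupling inequality is an averaged (conditional-expectation) statement, whereas the construction of a bona fide stochastic kernel requires a realization-wise noising procedure. Individual realizations of $(\Delta, \tilde\Delta)$ need not satisfy the stronger BSC-comparability that would permit direct further flipping; to bridge this gap one invokes a Blackwell-type randomization lemma showing that any BMS channel can be realized as a mixture of BSCs prescribed by its $\Delta$-distribution, and the averaged inequality is exactly what allows the kernel $R$ to rewire the BSC-mixture defining $Q$ into the one defining $P$. Carefully verifying that the induced output distribution matches $P$ (and not some other BMS channel with the same $\Delta$-marginal) is the crux of the argument and typically requires checking invariance under the involution $\sigma$ in the BMS definition.
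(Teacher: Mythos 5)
The paper states this lemma without proof (it is cited as standard BMS-channel background), so what follows is a review of your argument on its own merits. Your forward direction has the right engine — Markov chain $X\to Y\to Z$, Jensen applied to the concave map $m\mapsto\min(m,1-m)$, then a tower-property step — but the parenthetical fix of ``swapping the roles of $P$ and $Q$'' is not a valid move: degradation is one-directional, and $P\le_{\deg}Q$ does not let you conclude $Q\le_{\deg}P$. What your argument actually produces is $\bE[\tilde\Delta\mid\Delta]\le\Delta$, i.e.\ conditioning on the $\Delta$-component of the \emph{more} degraded channel. That is the form the paper actually uses at~\eqref{eq:deg_1}, and it is the correct form: as printed the lemma's inequality gives, after taking expectations, $P_e(P)\le P_e(Q)$, contradicting Lemma~\ref{lemma:deg-info-measure}. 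You should also be aware that $\bE[\tilde\Delta\mid\Delta]\le\Delta$ and $\bE[\Delta\mid\tilde\Delta]\ge\tilde\Delta$ are \emph{not} interchangeable: only the former gives $\bE[\phi(\Delta)]\ge\bE[\phi(\tilde\Delta)]$ for concave increasing $\phi$ (Jensen conditionally on $\Delta$, then monotonicity), so the direction of conditioning is load-bearing, not a cosmetic symmetry.

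The reverse direction has a genuine gap. Your step (ii) — a realization-wise extra flip of $Z_Q$ — needs $\Delta\ge\tilde\Delta$ pointwise, which the coupling does not supply, and your closing paragraph acknowledges this without resolving it. The missing idea is to aggregate over $\tilde\Delta$ \emph{before} adding noise, rather than after. Condition on $\Delta$: the data available to the degrading kernel — $(\tilde\Delta, Z_Q)$ together with the independent randomness used to draw $\Delta$ from the coupling conditional — constitutes, given $\Delta$, a BMS channel whose $\Delta$-distribution is the conditional law of $\tilde\Delta$ given $\Delta$, and whose error probability is therefore $\bE[\tilde\Delta\mid\Delta]\le\Delta$. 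Now invoke the elementary fact that $\BSC_\alpha\le_{\deg}W$ if and only if $\alpha\ge P_e(W)$ (MAP decision from $W$ yields $\BSC_{P_e(W)}$, which composes with a further $\BSC$ to reach any $\alpha\in[P_e(W),\tfrac12]$). Applied conditionally on $\Delta$, this produces $Z_P\sim\BSC_\Delta(X)$; outputting $(\Delta,Z_P)$ is exactly $P$ in standard form. This conditional-on-$\Delta$ reduction to a single-$\BSC$ target, where an error-probability comparison suffices, is the concrete content of the ``Blackwell-type randomization'' you gesture at, and the reverse implication does not close without it.
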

\begin{defin}\label{defn:information-measures}
  Let $P$ be a BMS channel and $\Delta$ be the $\Delta$-component of $P$. We define the following quantities.
  \begin{align}
    P_e(P) &= \bE \Delta, \tag{probability of error}\\
    C(P) & = \bE [\log 2 + \Delta \log \Delta + (1-\Delta)\log(1-\Delta)], \tag{capacity}\\
    C_{\chi^2}(P) &= \bE[(1-2\Delta)^2], \tag{$\chi^2$-capacity}\\
    Z(P) & = \bE[2\sqrt{\Delta(1-\Delta)}]. \tag{Bhattacharyya coefficient}
  \end{align}
\end{defin}
By definition, $P_e(P) \in [0, \frac 12]$, $C(P) \in [0, \log 2]$, $C_{\chi^2}(P)\in [0, 1]$, $Z(P)\in [0, 1]$.

\begin{lemma}\label{lemma:deg-info-measure}
  If $P\le_{\deg} Q$, then the following holds:
  \begin{align}
  P_e(P)  \ge P_e(Q),\quad
  C(P) \le C(Q),\quad
      C_{\chi^2}(P) \le C_{\chi^2}(Q),\quad
      Z(P) \ge Z(Q).
  \end{align}
\end{lemma}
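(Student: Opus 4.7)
The plan is to derive each inequality from the data processing inequality (DPI), using the fact that $P\le_{\deg} Q$ gives a channel $R$ with $P = R\circ Q$. Fix input $X\sim \Unif(\{\pm 1\})$, let $Y_Q$ denote the output of $Q$, and set $Y_P = R(Y_Q)$; this produces the Markov chain $X\to Y_Q\to Y_P$. For $P_e$, I would observe that for BMS channels with uniform prior $P_e(\cdot)$ is the Bayes risk under zero-one loss, and any estimator of $X$ based on $Y_P$ can be mimicked from $Y_Q$ by first applying $R$; hence the Bayes risk from $Y_Q$ is no larger than the one from $Y_P$, yielding $P_e(Q)\le P_e(P)$. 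For capacity, BMS symmetry makes the uniform input capacity-achieving, so $C(P) = I(X;Y_P)$, and DPI for mutual information directly yields $C(P)\le C(Q)$.

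For $C_{\chi^2}$ I would use the identity $C_{\chi^2}(P) = \bE[(\bE[X\mid Y_P])^2]$, which follows from $\bE[X\mid Y] = \pm(1-2\Delta)$ when $X\sim \Unif(\{\pm 1\})$. The tower property gives the martingale relation $\bE[X\mid Y_P] = \bE[\bE[X\mid Y_Q]\mid Y_P]$, and conditional Jensen with the convex map $u\mapsto u^2$ produces $(\bE[X\mid Y_P])^2\le \bE[(\bE[X\mid Y_Q])^2\mid Y_P]$; integrating yields $C_{\chi^2}(P)\le C_{\chi^2}(Q)$. For $Z$ I would pass to the standard form, in which direct computation gives $Z(P) = \int \sqrt{dP_{Y\mid +}\, dP_{Y\mid -}}$, matching $\bE[2\sqrt{\Delta(1-\Delta)}]$. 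Writing $2(1-Z(P)) = d_H^2(P_{Y\mid +},P_{Y\mid -})$ exhibits $Z$ as an affine function of a squared Hellinger distance, which is an $f$-divergence; DPI for $f$-divergences then gives $d_H^2(P^P_+,P^P_-)\le d_H^2(P^Q_+,P^Q_-)$ and hence $Z(P)\ge Z(Q)$.

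Alternatively, all four assertions follow uniformly by writing each quantity as $\bE[f(\Delta)]$ for an appropriate $f\colon [0,\tfrac 12]\to \mR$ and applying conditional Jensen against the coupling furnished by the preceding lemma ($f$ being convex-decreasing for $C,C_{\chi^2}$ and concave-increasing for $P_e,Z$). I do not anticipate any serious obstacle here; each step is a textbook DPI application. The only care required is to track the monotonicity directions (more degradation produces larger $P_e$ and $Z$ but smaller $C$ and $C_{\chi^2}$), which is automatic once the correct DPI has been invoked.
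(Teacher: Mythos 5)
Your DPI-based proof is correct, and since the paper proves none of these appendix lemmas (they are deferred to \cite[Chapter 4]{richardson2008modern}), there is nothing of the paper's to compare against. All four monotonicity directions are verified correctly: $P_e$ via simulating the $P$-decoder from $Y_Q$, $C$ via mutual-information DPI with the uniform prior (capacity-achieving for any BMS), $C_{\chi^2}$ via the conditional Jensen / martingale argument (equivalently DPI for $\chi^2$-information), and $Z$ via DPI for squared Hellinger distance. The explicit convexity/monotonicity bookkeeping for the unified alternative also checks out: $\Delta\mapsto \log 2 + \Delta\log\Delta + (1-\Delta)\log(1-\Delta)$ and $\Delta\mapsto (1-2\Delta)^2$ are convex decreasing on $[0,\tfrac 12]$, while $\Delta\mapsto\Delta$ and $\Delta\mapsto 2\sqrt{\Delta(1-\Delta)}$ are concave increasing.

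One caution on the ``alternative'' route you sketch at the end: the coupling lemma as printed in the paper's appendix has the roles of $\Delta$ and $\tilde\Delta$ reversed. As stated, it reads $\bE[\Delta\mid\tilde\Delta]\le\tilde\Delta$ with $\Delta$ the component of the \emph{more} degraded $P$; taking $P=\BSC_{1/2}$, $Q=\BSC_0$ gives $\bE[\tfrac 12\mid 0]\le 0$, which is false. The correct coupling is $\bE[\tilde\Delta\mid\Delta]\le\Delta$ (condition on the more degraded side), which is in fact what the paper uses in its own application at \eqref{eq:deg_1}, where the \emph{less} degraded channel's $\Delta$-component appears inside the conditional expectation. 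With that correction your conditional-Jensen argument goes through exactly as you describe, so the gap lies in the paper's statement of the preceding lemma rather than in your reasoning; still, if you invoke it, quote the corrected form.
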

\begin{lemma}[{Restatement of \cite[Lemma 4.2(iii)]{evans2000}}] \label{lemma:chi2-var}
  Let $P: X\to Y$ be a BMS channel with $Y$ a real variable, and with involution $Y\mapsto -Y$.
  Then
  \begin{align}
    C_{\chi^2}(P) \ge \frac{(\bE^+ Y)^2}{\Var(Y)}.
  \end{align}
\end{lemma}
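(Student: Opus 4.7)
The plan is to prove this by a single Cauchy--Schwarz inequality after rewriting $C_{\chi^2}(P)$ in terms of the posterior probability. First I would observe that for a BMS channel $P : X \to Y$, if we view $Y$ itself as the channel output, then the $\Delta$-component of the standard form of $P$ satisfies $\Delta = \min(P(X=+|Y), P(X=-|Y))$ almost surely, since the standard form can be read off directly from the posterior. In particular $1 - 2\Delta = |2P(X=+|Y) - 1|$, so squaring and taking expectations yields the identity
\begin{align*}
    C_{\chi^2}(P) = \bE_Y\bigl[(2 P(X=+|Y) - 1)^2\bigr].
\end{align*}

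Next I would exploit the stated symmetry. Since $X \sim \Unif\{\pm 1\}$ and the conditional law of $Y$ given $X=-$ is the pushforward of the law of $Y$ given $X=+$ under $y\mapsto -y$, one has $\bE Y = 0$. A direct computation then gives
\begin{align*}
    \bE_Y\bigl[Y\cdot (2 P(X=+|Y) - 1)\bigr] = 2 \bE[Y\, \mathbf{1}_{X=+}] - \bE Y = \bE^+ Y.
\end{align*}

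Finally I would apply Cauchy--Schwarz to the pair $\bigl(Y,\ 2P(X=+|Y) - 1\bigr)$ under the marginal law of $Y$:
\begin{align*}
    (\bE^+ Y)^2 = \bigl(\bE_Y\bigl[Y\,(2 P(X=+|Y) - 1)\bigr]\bigr)^2 \le \bE_Y[Y^2]\cdot \bE_Y\bigl[(2P(X=+|Y) - 1)^2\bigr] = \Var(Y)\cdot C_{\chi^2}(P),
\end{align*}
which upon rearrangement gives the claim. The argument is essentially a one-line Cauchy--Schwarz; the only steps needing care are the identification of $C_{\chi^2}(P)$ with the second moment of $2P(X=+|Y) - 1$ (for which the hypothesis that the involution is $Y\mapsto -Y$ is exactly what lets us use $Y$ itself as a sufficient statistic of the standard form) and the use of symmetry to force $\bE Y = 0$ so that the left-hand side of Cauchy--Schwarz collapses cleanly to $\bE^+ Y$. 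No fixed-point or contraction machinery from the main text is needed here.
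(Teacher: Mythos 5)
Your proof is correct and is essentially the paper's one-line Cauchy--Schwarz argument, with a minor but genuine variation in how the inequality is set up. The paper applies Cauchy--Schwarz under the $X=+$ conditional to the (nonnegative) pair $\bigl(1-2\Delta,\ |Y|\bigr)$, writing $(\bE^+[(1-2\Delta)|Y|])^2 \le \bE^+[(1-2\Delta)^2]\,\bE^+[Y^2]$ and then identifying $\bE^+[(1-2\Delta)|Y|]$ with $\bE^+Y$; that last identification implicitly assumes the sign of $2P(X=+\mid Y)-1$ agrees with the sign of $Y$ (it is an equality only under that alignment, and an inequality $\bE^+[(1-2\Delta)|Y|]\ge \bE^+ Y$ in general, which still suffices). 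You instead apply Cauchy--Schwarz under the \emph{unconditional} law of $Y$ to the signed pair $\bigl(2P(X=+\mid Y)-1,\ Y\bigr)$, and the cross term $\bE_Y\bigl[Y\,(2P(X=+\mid Y)-1)\bigr]=\bE^+Y$ is a pure identity (a direct computation using $\bE Y=0$ and the tower property), so no sign assumption is needed. The identification $C_{\chi^2}(P)=\bE_Y[(2P(X=+\mid Y)-1)^2]$ via $\Delta=\min\{P(X=+\mid Y),P(X=-\mid Y)\}$ is correct, as is $\bE_Y[Y^2]=\Var(Y)$ from the symmetry-forced $\bE Y = 0$. Net effect: same Cauchy--Schwarz idea, with your version being marginally more self-contained on the sign point.
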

\begin{proof}
  Let $X\to (\Delta,Z)$ be the equivalent standard form of $P$.
  By Cauchy-Schwarz, we have
  \begin{align}
    \bE^+[(1-2\Delta)^2] \bE^+[Y^2] \ge (\bE^+[(1-2\Delta)|Y|])^2 = (\bE^+ Y)^2.
  \end{align}
  This is equivalent to the desired result.
\end{proof}

\section{Proof of Proposition \ref{prop:high-snr}} \label{app:high-snr}
  Let us first deal with the regular tree case.
  Let $u$ be a vertex and $v_1,\ldots, v_d$ be its children.
  Let $R_{v_1,k}^+,\ldots, R_{v_d,k}^+$ be i.i.d.~$\sim \mu_k^+$,
  and $\tilde R_{v_1,k}^+,\ldots, \tilde R_{v_d,k}^+$ be i.i.d.~$\sim \tilde \mu_k^+$.
  Define $R_{u,k+1}^{+}$ and $\tilde R_{u,k+1}^{+}$ using \eqref{eqn:distribution-recursion}.
  Furthermore, for $0\le i\le d$,
  define $R_{u,i,k+1}^+$ as
  \begin{align}
    R_{u,i,k+1}^+ = \sum_{1\le j\le i} Z_j F_\theta(\tilde R_{v_j,k}^+) + \sum_{i+1\le j\le d} Z_j F_\theta(R_{v_j,k}^+) + W_u.
  \end{align}
  That is, $R_{u,0,k+1}^+ = R_{u,k+1}^+$, and $R_{u,d,k+1}^+ = \tilde R_{u,k+1}^+$.
  
  For $1\le i\le d$ and $k$ large enough, let us prove that
  \begin{align}
    \bE [\exp(-\frac 12 R_{u,i,k+1}^+) - \exp(-\frac 12 R_{u,i-1,k+1}^+)]
    \le (1+\epsilon) \frac{C_1}{d} \bE[\exp(-\frac 12 \tilde R_{v_1,k}^+) - \exp(-\frac 12 R_{v_1,k}^+)]\label{eqn:high-snr-key}
  \end{align}
  where $C_1$ is defined in \eqref{eqn:high-snr-defn-C1}.
  We prove that \eqref{eqn:high-snr-key} is true even if conditioned on $\tilde \Delta_{v_i,k}$.
  For $\Delta \in [0, \frac 12]$, define 
  \begin{align}
    G(\Delta) = \bE[ \exp(-\frac 12 R_{u,i,k+1}^+) - (1+\epsilon) \frac{C_1}{d} \exp(-\frac 12 \tilde R_{v_i,k}^+)| \tilde \Delta_{v_i,k} = \Delta].
  \end{align}
  Define $p(\Delta) = - g(\Delta) = 2\sqrt{\Delta(1-\Delta)}$ so that we work with non-negative numbers. So
  \begin{align}
    \bE[\exp(-\frac 12 \tilde R_{v_i,k}^+) | \tilde \Delta_{v_i,k} = \Delta] = p(\Delta).
  \end{align}
  Then
  \begin{align}
    &\bE[\exp(-\frac 12 R_{u,i,k+1}^+) | \tilde \Delta_{v_i,k} = \Delta] \nonumber\\
    &= \prod_{1\le j \le i-1} \bE[\exp(-\frac 12 Z_j F_\theta(\tilde R_{v_j,k}^+))] \cdot
    \prod_{i+1\le j\le d} \bE[\exp(-\frac 12 Z_j F_\theta( R_{v_j,k}^+))] \nonumber \\
    &\cdot 
    \bE[\exp(-\frac 12 Z_i F_\theta(\tilde R_{v_i,k}^+)) | \tilde \Delta_{v_i,k} = \Delta]
    \cdot
    \bE[\exp(-\frac 12 W_u)]. \label{eqn:high-snr-G}
  \end{align}
  Let us examine $\bE[\exp(-\frac 12 Z_i F_\theta(\tilde R_{v_i,k}^+)) | \tilde \Delta_{v_i,k} = \Delta]$.
  We can compute that
  \begin{align}
  \exp(-\frac 12 Z_i F_\theta(\tilde R_{v_i,k}^+)) = \left\{ \begin{array}{ll} \exp(-\frac 12 \log \frac{1- \Delta * \delta}{ \Delta * \delta}), & \text{w.p.~} 1- \Delta * \delta, \\ \exp(+\frac 12 \log \frac{1-\Delta * \delta}{ \Delta * \delta}), &  \text{w.p.~} \Delta * \delta, \end{array}\right.
  \end{align}
  where we use notation $\delta_1 * \delta_2 = \delta_1(1-\delta_2) + \delta_2(1-\delta_1)$).
  So 
  \begin{align}
    \bE[\exp(-\frac 12 Z_i F_\theta(\tilde R_{v_i,k}^+)) | \tilde \Delta_{v_i,k} = \Delta] = \bE[p(\Delta * \delta)].
  \end{align}
  Similarly,
  \begin{align}
    \bE[\exp(-\frac 12 Z_j F_\theta(\tilde R_{v_j,k}^+))] &= \bE [p(\tilde \Delta_{v_1,k} * \delta)],\\
    \bE[\exp(-\frac 12 Z_j F_\theta(R_{v_j,k}^+))] &= \bE [p(\Delta_{v_1,k} * \delta)].
  \end{align}
  Finally,
  \begin{align}
  \bE[\exp(-\frac 12 W_u)] = \bE[p(\Delta_W)] = Z(W).
  \end{align}
  So from \eqref{eqn:high-snr-G} we get
  \begin{align}
  \bE[\exp(-\frac 12 R_{u,i,k+1}^+) | \tilde \Delta_{v_i,k} = \Delta]
  =
  \bE [p(\tilde \Delta_{v_1,k} * \delta)]^{i-1} \bE[p(\Delta_{v_1,k} * \delta)]^{d-i}
    p(\Delta * \delta) Z(W).
  \end{align}

  So
  \begin{align}
    G''(\Delta) &= \bE [p(\tilde \Delta_{v_1,k} * \delta)]^{i-1} \bE[p(\Delta_{v_1,k} * \delta)]^{d-i}
     Z(W) \frac{d^2}{d \Delta^2} p(\Delta * \delta)  \nonumber\\
     &- (1+\epsilon) \frac{C_1}{d} p''(\Delta).\label{eqn:high-snr-Gpp}
  \end{align}
  
  Let us bound each factor.
  \begin{align}
    p(\Delta * \delta)
    = 2\sqrt{(\Delta * \delta)(1-\Delta * \delta)} 
    = \sqrt{1-\theta^2(1-2\Delta)^2}.\label{eqn:high-snr-phi-star}
  \end{align}
  So
  \begin{align}
    \bE [p(\tilde \Delta_{v_1,k} * \delta)]
    = \bE[\sqrt{1-\theta^2 (1-2\tilde \Delta_{v_1,k})^2}] 
     \le \sqrt{1-\theta^2 \bE (1-2\tilde \Delta_{v_1,k})^2}.\label{eqn:bhatta-to-chi2}
  \end{align}
  By Proposition \ref{prop:high-snr-chi2}, for any $\epsilon' > 0$, for $k$ large enough, we have
  \begin{align}
  \bE[(1-2\tilde \Delta_{v_1,k})^2] \ge (\frac{d\theta^2-1}{(d-1)\theta^2}-\epsilon')_+.
  \end{align}
  So
  \begin{align}
    \bE [p(\tilde \Delta_{v_1,k} * \delta)] \le
    \sqrt{1-\theta^2 (\frac{d\theta^2-1}{(d-1)\theta^2}-\epsilon')_+}.
  \end{align}

  Similarly, 
  \begin{align}
    \bE [p(\Delta_{v_1,k} * \delta)] \le 
    \sqrt{1-\theta^2 (\frac{d\theta^2-1}{(d-1)\theta^2}-\epsilon')_+}.
  \end{align}
  Note that $p$ is strictly concave on $[0, \frac 12]$, and $p'(\frac 12) = 0$.
  So
  \begin{align}
    \frac{d^2}{d \Delta^2} p(\Delta * \delta) \ge \theta^2 p''(\Delta).\label{eqn:high-snr-phi-star-pp}
  \end{align}
  
  So \eqref{eqn:high-snr-Gpp} gives
  \begin{align}
    G''(\Delta)& \ge (1-\theta^2 (\frac{d\theta^2-1}{(d-1)\theta^2}-\epsilon')_+ )^{\frac{d-1}2} \theta^2 p''(\Delta) Z(W) -  (1+\epsilon)\frac{C_1}{d} p''(\Delta) \nonumber\\
    & = ((1-\theta^2 (\frac{d\theta^2-1}{(d-1)\theta^2}-\epsilon')_+ )^{\frac{d-1}2} \theta^2Z(W) - (1+\epsilon)\frac{C_1}{d}) p''(\Delta).
  \end{align}
  Note that
  \begin{align}
  \lim_{\epsilon'\to 0} (1-\theta^2 (\frac{d\theta^2-1}{(d-1)\theta^2}-\epsilon')_+ )^{\frac{d-1}2}
   = (1-\frac{(d\theta^2-1)_+}{d-1} )^{\frac{d-1}2}.
  \end{align}
  So we can take $\epsilon'>0$ small enough so that
  \begin{align}
  (1-\theta^2 (\frac{d\theta^2-1}{(d-1)\theta^2}-\epsilon')_+ )^{\frac{d-1}2} < (1+\epsilon) (1-\frac{(d\theta^2-1)_+}{d-1} )^{\frac{d-1}2}.
  \end{align}
  So for $k$ large enough, $G''(\Delta)\le 0$ for all $\Delta\in [0, \frac 12]$ and $G(\Delta)$ is convex.
  Also, 
  \begin{align}
    G'(\frac 12) &= \bE [p(\tilde \Delta_{v_1,k} * \delta)]^{i-1} \bE[p(\Delta_{v_1,k} * \delta)]^{d-i}
     Z(W) \frac{d}{d\Delta}|_{\Delta=\frac 12}p(\Delta * \delta) \nonumber\\ 
     &- (1+\epsilon)\frac{C_1}{d} p'(\frac 12) \\
    &= 0.
  \end{align}
  So $G'$ is non-positive, thus $G$ is decreasing on $[0, \frac 12]$.
  Because $M_k (v_i)$ (BMS corresponding to $R_{v_i,k}^+$) is less degraded than $\tilde M_k(v_i)$ (BMS corresponding to $\tilde R_{v_i,k}^+$),
  we get \eqref{eqn:high-snr-key}.
  
  For Galton-Watson trees with Poisson offspring distribution, the proof is very similar to, and slightly more involved than the regular case.
  Let $u$ be a vertex.
  Let $R_{v_1,k}^+, R_{v_2,k}^+,\ldots$ be i.i.d.~$\sim \mu_k^+$,
  and $\tilde R_{v_1,k}^+,\tilde R_{v_2,k}^+,\ldots$ be i.i.d.~$\sim \tilde \mu_k^+$.
  Let $b\sim \Po(d)$ and $v_1,\ldots, v_b$ be the children of $u$.
  For $i\ge 0$, define
  \begin{align}
    R_{u,i,k+1}^+ = \sum_{1\le j\le \min\{i, b\}} Z_j F_\theta(\tilde R_{v_j,k}^+) + \sum_{i+1\le j\le b} Z_j F_\theta(R_{v_j,k}^+) + W_u.
  \end{align}
  For $i \ge 1$, let us prove that
  \begin{align}
    \bE [\exp(-\frac 12 R_{u,i,k+1}^+) - \exp(-\frac 12 R_{u,i-1,k+1}^+)] &
    \le c_i \bE[\exp(-\frac 12 \tilde R_{v_1,k}^+) - \exp(-\frac 12 R_{v_1,k}^+)].\label{eqn:high-snr-poisson-key}
  \end{align}
  where $c_i$ are constants to be chosen later.
  Define
  \begin{align}
    G_i(\Delta) = \bE[ \exp(-\frac 12 \tilde R_{u,i,k+1}^+) - c_i \exp(-\frac 12 \tilde R_{v_i,k}^+)| \tilde \Delta_{v_i,k} = \Delta].
  \end{align}
  Let us prove that $G_i$ is decreasing and convex on $[0, \frac 12]$.
  Similarly to \eqref{eqn:high-snr-Gpp}, we have
  \begin{align}
    G_i''(\Delta) = \bE_b [\mathbbm{1}_{b \ge i} \bE [p(\tilde \Delta_{v_1,k} * \delta)]^{i-1} \bE[p(\Delta_{v_1,k} * \delta)]^{b-i} Z(W) \frac{d^2}{d \Delta^2}p(\Delta * \delta)] - c_i p''(\Delta).\label{eqn:high-snr-poisson-Gpp}
  \end{align}
  Let us study each term in \eqref{eqn:high-snr-poisson-Gpp}.
  By \eqref{eqn:high-snr-phi-star} and Proposition \ref{prop:high-snr-chi2},
  for any $\epsilon'>0$, for $k$ large enough, we have
  \begin{align}
    \bE [p(\tilde \Delta_{v_1,k} * \delta)] \le \sqrt{1-\theta^2 \bE (1-2\tilde \Delta_{v_1,k})^2}
     \le \sqrt{1-\theta^2 (\frac{d\theta^2-1}{d \theta^2}-\epsilon')_+}.
  \end{align}
  Similarly, 
  \begin{align}
    \bE [p(\Delta_{v_1,k} * \delta)] \le \sqrt{1-\theta^2 (\frac{d\theta^2-1}{d \theta^2}-\epsilon')_+}.
  \end{align}
  \eqref{eqn:high-snr-phi-star-pp} still holds in the Poisson case.
  So \eqref{eqn:high-snr-poisson-Gpp} gives
  \begin{align}
    G_i''(\Delta) \ge (\bE_b [\mathbbm{1}_{b \ge i} (1-\theta^2 (\frac{d\theta^2-1}{d \theta^2}-\epsilon')_+)^{\frac{b-1}2}] \theta^2 Z(W) - c_i) p''(\Delta).
  \end{align}
  We can take 
  \begin{align}
    c_i = \bE_b [\mathbbm{1}_{b \ge i}(1-\theta^2 (\frac{d\theta^2-1}{d \theta^2}-\epsilon')_+)^{\frac{b-1}2}] \theta^2 Z(W)\label{eqn:high-snr-poisson-c}
  \end{align}
  so that $G_i''(\Delta)\ge 0$ for all $i\ge 1$ and $\Delta\in [0, \frac 12]$.
  Also,
  \begin{align}
    G_i'(\frac 12) &= \bE_b [\mathbbm{1}_{b \ge i} \bE [p(\tilde \Delta_{v_1,k} * \delta)]^{i-1} \bE[p(\Delta_{v_1,k} * \delta)]^{b-i} Z(W)  \frac{d}{d\Delta}|_{\Delta=\frac 12}p(\Delta * \delta) ]\nonumber\\
    &- c_i p'(\frac 12)\\
    & = 0.
  \end{align}
  So $G_i$ is decreasing.
  
  By summing up \eqref{eqn:high-snr-poisson-key} for $i\ge 1$, we get
  \begin{align}
    \bE[\exp(-\frac 12 \tilde R_{u,k+1}^+) - \exp(-\frac 12 R_{u,k+1}^+)]
    \le (\sum_{i\ge 1} c_i) \bE[\exp(-\frac 12 \tilde R_{v_1,k}^+) - \exp(-\frac 12 R_{v_1,k}^+)].
  \end{align}
  By \eqref{eqn:high-snr-poisson-c}, we have
  \begin{align}
    \sum_{i\ge 1} c_i &= \theta^2 \bE_b [ \mathbbm{1}_{b \ge i}(1-\theta^2 (\frac{d\theta^2-1}{d \theta^2}-\epsilon')_+)^{\frac{b-1}2}] Z(W) \\
    & \le d \theta^2 \exp(-d (1-\sqrt{1-\theta^2 (\frac{d\theta^2-1}{d \theta^2}-\epsilon')_+} )) Z(W).
  \end{align}
  We can take $\epsilon'>0$ small enough so that
  \begin{align}
  \exp(-d (1-\sqrt{1-\theta^2 (\frac{d\theta^2-1}{d \theta^2}-\epsilon')_+} )) < (1+\epsilon) \exp(-d (1-\sqrt{1-\frac{(d\theta^2-1)_+}{d}} )).
  \end{align}
  This finishes the proof for the Poisson tree case.


\section{$\chi^2$-capacity of broadcasting-on-tree channels}
\begin{prop}\label{prop:high-snr-chi2}
Consider the Broadcasting on Trees model defined in Section \ref{sec:bi-and-sbm}, with the following observation models:
\begin{itemize}
    \item $M^1_k: \sigma_\rho \to \nu_{L_k}$, where $\nu_v \sim \BSC_\eta(\sigma_v)$;
    \item $M^2_k: \sigma_\rho \to (\sigma_{L_k}, \omega_{T_k})$.
    \item $M^3_k: \sigma_\rho \to \sigma_{L_k}$;
    \item $M^4_k: \sigma_\rho \to \omega_{T_k}$ with non-trivial survey channel $W$;
    \item $M^5_k: \sigma_\rho \to \omega_{L_k}$ with non-trivial survey channel $W$.
\end{itemize}
For each of the above channels, we have
\begin{itemize}
    \item If we work with regular trees, then
    \begin{align}
    \lim_{k\to \infty} C_{\chi^2}(M_k) \ge \frac{(d\theta^2-1)_+}{\theta^2(d-1)}.
    \end{align}
    \item If we work with Galton-Watston trees with Poisson offspring distribution, then
    \begin{align}
    \lim_{k\to \infty} C_{\chi^2}(M_k) \ge \frac{(d\theta^2-1)_+}{d\theta^2}.
    \end{align}
\end{itemize}
\end{prop}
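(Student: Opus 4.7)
The plan is to apply Lemma~\ref{lemma:chi2-var} with a linear ``magnetization-at-level-$k$'' statistic and read off the bound from a standard second-moment computation. As a preliminary reduction via degradation (Lemma~\ref{lemma:deg-info-measure}), since $M^2_k\ge_{\deg} M^3_k$ (forget the survey) and $M^4_k\ge_{\deg} M^5_k$ (forget survey at non-leaves), it suffices to prove the bound for the three ``leaf-only'' channels $M^1_k$, $M^3_k$, $M^5_k$. All three fit the template ``each leaf $v\in L_k$ observed through a fixed BMS channel $W$'': $W=\BSC_\eta$ for $M^1_k$, $W=\mathrm{Id}$ for $M^3_k$, and a general non-trivial $W$ for $M^5_k$. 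For any such template, choose an odd real-valued statistic $f$ of the leaf output with $c_1:=\bE[f(\omega)\mid\sigma=+]>0$; take $f(x)=x$ for $M^1_k, M^3_k$, and for $M^5_k$ existence of such $f$ follows by applying Lemma~\ref{lemma:chi2-var} to $W$ itself, whose $\chi^2$-capacity is positive by non-triviality. Set $Y_k:=\sum_{v\in L_k(T)} f(\omega_v)$; the BMS involution on $M_k$ simultaneously flips all spins and survey values, so $Y_k\mapsto -Y_k$, and Lemma~\ref{lemma:chi2-var} yields $C_{\chi^2}(M_k)\ge (\bE^+ Y_k)^2 / \bE^+ Y_k^2$.

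Next, compute the two moments. BMS symmetry of $W$ together with oddness of $f$ gives $\bE[f(\omega_v)\mid\sigma_v]=c_1\sigma_v$, and the broadcast identity $\bE[\sigma_v\mid\sigma_\rho=+]=\theta^k$ for $v\in L_k$ therefore yields $\bE^+ Y_k = c_1(d\theta)^k$ (using $\bE|L_k|=d^k$ after tree-averaging in the Poisson case). Conditional independence of $\sigma_u,\sigma_v$ given $\sigma_{\mathrm{LCA}(u,v)}$ gives $\bE[f(\omega_u)f(\omega_v)\mid\sigma_\rho=+]=c_1^2\theta^{2(k-j)}$ for $u\ne v\in L_k$ with LCA at level~$j$, so letting $N_j$ count ordered pairs in $L_k^2$ with LCA at level~$j$,
\begin{align}
\bE^+ Y_k^2 = c_2\,\bE|L_k| + c_1^2\sum_{j=0}^{k-1}\bE N_j\cdot\theta^{2(k-j)},
\end{align}
where $c_2:=\bE[f(\omega)^2\mid\sigma=+]$. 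A direct count gives $N_j=(d-1)d^{2k-j-1}$ for regular trees, and the factorial-moment identity $\bE[B(B-1)]=d^2$ for Poisson$(d)$ combined with $\bE|L_k|=d^k$ gives $\bE N_j = d^{2k-j}$ for $j<k$. In both cases this reduces to
\begin{align}
\bE^+ Y_k^2 = c_2 d^k + c_1^2 \kappa_d\, d^k\sum_{\ell=1}^{k}(d\theta^2)^\ell,
\end{align}
with $\kappa_d=(d-1)/d$ in the regular case and $\kappa_d=1$ in the Poisson case.

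When $d\theta^2>1$ the geometric sum is dominated by its largest term, and
\begin{align}
\liminf_{k\to\infty}\frac{(\bE^+ Y_k)^2}{\bE^+ Y_k^2} = \frac{d\theta^2-1}{\kappa_d\, d\theta^2},
\end{align}
which is exactly $(d\theta^2-1)/((d-1)\theta^2)$ for the regular tree and $(d\theta^2-1)/(d\theta^2)$ for the Poisson tree; the case $d\theta^2\le 1$ is trivial since the claimed lower bound is~$0$. The main obstacle is purely bookkeeping: verifying that a single odd statistic $Y_k$ works uniformly for all five channels, establishing the pair counts $\bE N_j$ cleanly under random-tree averaging, and checking that the BMS involution commutes correctly with the tree structure. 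No new idea beyond the classical second-moment Kesten-Stigum technique is needed; the bound arises because the magnetization at level~$k$ captures precisely the branching contribution $(d\theta^2)^k$ to the second moment, matching the expected signal $(d\theta)^k$ squared up to the stated factor.
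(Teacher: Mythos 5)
Your proposal is correct and reproduces the claimed limits, but it takes a cleaner, more self-contained route than the paper. The paper reduces \emph{every} $M^i_k$ to $M^1_k$ by degradation — in particular, the step $M^5_k \ge_{\deg} M^1_k$ (with $\eta=P_e(W)$) is offloaded to an external result (\cite[Lemma 2, 3]{roozbehani2019low}) — and then bounds $C_{\chi^2}(M^1_k)$ via Lemma~\ref{lemma:chi2-var} applied to the magnetization $S_k=\sum_{v\in L_k}\nu_v$, with the moments computed by a variance decomposition (Prop.~\ref{prop:high-snr-majority}, citing \cite{Mossel_2016} for the regular case). You instead only use the cheap degradations $M^2_k\ge_{\deg}M^3_k$ and $M^4_k\ge_{\deg}M^5_k$, and then handle $M^1_k,M^3_k,M^5_k$ uniformly with a general odd leaf-statistic $Y_k=\sum_{v\in L_k}f(\omega_v)$ and a direct pair-count second-moment calculation. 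This avoids the external degradation lemma entirely, which is a genuine simplification; the pair-count computation ($N_j=(d-1)d^{2k-j-1}$ regular, $\bE N_j=d^{2k-j}$ Poisson) is equivalent to the paper's variance recursion and I verified it matches the claimed constants $\kappa_d$.

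One small logical slip: you justify the existence of an odd $f$ with $c_1=\bE[f(\omega)\mid\sigma=+]>0$ ``by applying Lemma~\ref{lemma:chi2-var} to $W$ itself.'' That lemma presupposes a channel with real output and the involution $Y\mapsto -Y$, so invoking it on a general BMS $W$ to \emph{produce} such an $f$ is circular. The fact you need is nonetheless immediate: writing $W$ in standard form $\omega=(\Delta,Z)$, take $f(\omega)=Z(1-2\Delta)$, which is odd under the involution and gives $c_1=\bE[(1-2\Delta)^2]=C_{\chi^2}(W)>0$ for non-trivial $W$. With that fix, the argument is complete and somewhat more elementary than the paper's.
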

\begin{proof}
  The $\chi^2$-capacity is always non-negative, so the $d\theta^2 \le 1$ case is automatic.
  In the following we assume $d\theta^2 > 1$.

  First we observe that all $M^i_k$'s are less degraded than $M^1_k$ for some suitable choice of $\eta$.
  This is obvious for $i=2,3$.
  Clearly $M^4_k$ is less degraded than $M^5_k$.
  That $M^5_k\le_{\deg} M^1_k$ follows from \cite[Lemma 2, 3]{roozbehani2019low}, where we can take $\eta = P_e(W)$.
  So by Lemma \ref{lemma:deg-info-measure}, we only need to prove the result for $M^1_k$.
  
  To apply Lemma \ref{lemma:chi2-var}, we need to find a BMS channel more degraded than $M^1_k$ which takes value in $\mR$.
  One natrual choice is the majority decoder.
  We define
  \begin{align}
      S_k = \sum_{v\in L_k} \nu_v.\label{eqn:majority-decoder}
  \end{align}
  Then the channel $\sigma_\rho \to S_k$ is clearly more degraded than $M^1_k$.
  We apply Proposition \ref{prop:high-snr-majority} to conclude.
\end{proof}

\begin{prop}\label{prop:high-snr-majority}
  Assume $d\theta^2 > 1$. Consider the channel $\sigma_\rho \to S_k$ defined in \eqref{eqn:majority-decoder}.

  For regular trees,
  \begin{align}
    \lim_{k\to \infty} \frac{\Var^+ S_k}{(\bE^+ S_k)^2} = \frac{1-\theta^2}{d\theta^2-1}.
  \end{align}
  For Galton-Watson trees with Poisson offspring,
  \begin{align}
    \lim_{k\to \infty} \frac{\Var^+ S_k}{(\bE^+ S_k)^2} = \frac{1}{d\theta^2-1}.
  \end{align}
\end{prop}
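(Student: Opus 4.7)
The plan is to compute the conditional first and second moments $m_k := \bE^+ S_k$ and $s_k := \bE^+ S_k^2$ by setting up one-step recursions that decompose the tree at the root, and then extract the leading-order behavior of $\Var^+ S_k / m_k^2$ in the regime $d\theta^2 > 1$. Writing $S_k = \sum_i S_{k-1}^{(i)}$ as a sum over the subtrees rooted at the children $v_i$ of $\rho$, the conditional independence of the subtrees given the child spins $\sigma_{v_i}$, together with the sign symmetry $\bE[S_{k-1}^{(i)} \mid \sigma_{v_i}] = \sigma_{v_i}\, m_{k-1}$ and the identity $\bE[\sigma_{v_i} \mid \sigma_\rho = +] = \theta$, yields the first-moment recursion $m_k = d\theta\, m_{k-1}$ (using $\bE[\xi] = d$ for the offspring). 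So $m_k = (1-2\eta)(d\theta)^k$ in both tree models.

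For the second moment I will expand the square and separate the diagonal ($i = j$) and off-diagonal ($i \neq j$) subtree terms. Using the conditional decoupling across distinct subtrees and the computation $\bE[\sigma_{v_i}\sigma_{v_j} \mid \sigma_\rho = +] = \theta^2$ for $i \neq j$, I get $s_k = d\, s_{k-1} + D_2 \theta^2 m_{k-1}^2$, where $D_2$ is the expected number of ordered pairs of distinct children: $D_2 = d(d-1)$ for the $d$-regular tree and $D_2 = \bE[\xi(\xi - 1)] = d^2$ for Poisson GW. Subtracting $m_k^2 = d^2 \theta^2 m_{k-1}^2$ produces the clean variance recursion
\begin{align}
V_k \;=\; d\, V_{k-1} + c\, m_{k-1}^2, \qquad c \;=\; \begin{cases} d(1-\theta^2), & \text{regular tree},\\ d, & \text{Poisson GW tree},\end{cases}
\end{align}
where the clean Poisson case is a direct consequence of $\bE[\xi(\xi-1)] = \bE[\xi]^2$ for $\xi \sim \Po(d)$.

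Iterating this linear recursion gives $V_k = d^k V_0 + c (1-2\eta)^2 d^{k-1} \sum_{j=0}^{k-1} (d\theta^2)^j$, with $V_0 = 1 - (1-2\eta)^2$. Under $d\theta^2 > 1$, the geometric sum behaves like $(d\theta^2)^k/(d\theta^2 - 1)$ while the initial-condition term $d^k V_0$ is of lower order by a factor $(d\theta^2)^{-k}$. Dividing by $m_k^2 = (1-2\eta)^2 (d\theta)^{2k}$ cancels the $(1-2\eta)^2$ and gives the stated limits $\frac{1-\theta^2}{d\theta^2 - 1}$ and $\frac{1}{d\theta^2 - 1}$. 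I do not anticipate any deep obstacle; the care goes into justifying the conditional decoupling of distinct subtrees given the root–child spins, and handling the randomness of the offspring count $\xi$ in the Poisson case via the two-moment Wald identities used implicitly above.
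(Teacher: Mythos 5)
Your proposal is correct. It gets to the same place as the paper via an equivalent but slightly differently organized calculation: you expand $\bE^+ S_k^2$ into diagonal and off-diagonal subtree terms and extract a second-moment recursion $s_k = d\,s_{k-1} + D_2\theta^2 m_{k-1}^2$, whereas the paper applies the law of total variance in two stages (conditioning on the offspring count $b$, then on the child spins) to arrive directly at the variance recursion $\Var^+ S_{\rho,k+1} = d(1-2\eta)^2(d\theta)^{2k} + d\,\Var^+ S_{\rho,k}$; subtracting $m_k^2$ from your $s_k$ recursion reproduces exactly this identity, including the cancellation $D_2 - d^2 = 0$ in the Poisson case. One nice feature of your write-up is that it handles the regular and Poisson trees in a single unified derivation by tracking $D_2 = \bE[\xi(\xi-1)]$, while the paper only presents the Poisson computation and cites Mossel--Neeman--Sly for the regular-tree formula. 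Your justification that distinct subtrees are conditionally independent given the child spins (and, for GW trees, given the offspring count) is the correct place to put the care, and it does go through by the Markov structure of the broadcast.
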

\begin{proof}
  The regular tree case is proved in \cite[Lemma 3.4, 3.5]{Mossel_2016}.
  (Note that the expression for $\lim_{k\to \infty} \frac{\Var^+ S_k}{(\bE^+ S_k)^2}$ on top of \cite[pg.~2224]{Mossel_2016} is incorrect.)

  Let us focus on the Poisson tree case.
  It is easy to see that 
  \begin{align}
    \bE^+ S_k = (1-2\eta) (d\theta)^k.\label{eqn:high-snr-expectation}
  \end{align}
  Let $\rho$ be the root, and $v_1,\ldots, v_b$ be its children.
  By variance decomposition, we have
  \begin{align}
    \Var^+ S_{\rho,k+1} &= \Var^+ \bE[S_{\rho,k+1} | b] + \bE_b \Var^+(\bE[S_{\rho, k+1} | b, \sigma_{v_1},\ldots, \sigma_{v_b}] | b) \nonumber\\
    &+ \bE \Var^+(S_{\rho, k+1} | b, \sigma_{v_1},\ldots, \sigma_{v_b}). \label{eqn:high-snr-var}
  \end{align}
  Let us compute each summand.
  \begin{align}
    \Var^+ \bE[S_{\rho,k+1} | b] = \Var^+ (b \theta (1-2\eta) (d\theta)^k) = d \theta^2 (1-2\eta)^2 (d\theta)^{2k}.\label{eqn:high-snr-var-p1}
  \end{align}
  \begin{align}
    &\bE_b \Var^+(\bE[S_{\rho, k+1} | b, \sigma_{v_1},\ldots, \sigma_{v_b}] | b) \nonumber\\
    & = \bE_b \Var^+ (\sum_{i\in [b]} \sigma_{v_i} (1-2\eta) (d\theta)^k | b) \nonumber\\
    & = \bE_b [b (1-\theta^2) (1-2\eta)^2 (d\theta)^{2k}] \nonumber\\
    & = d (1-\theta^2) (1-2\eta)^2 (d\theta)^{2k}.\label{eqn:high-snr-var-p2}
  \end{align}
  \begin{align}
    \bE \Var^+(S_{\rho, k+1} | b, \sigma_{v_1},\ldots, \sigma_{v_b})
     = \bE_b [b \sum_{i\in [b]} \Var^+ S_{v_i,k}]
     = d \Var^+ S_{\rho,k}.\label{eqn:high-snr-var-p3}
  \end{align}
  Plugging \eqref{eqn:high-snr-var-p1}\eqref{eqn:high-snr-var-p2}\eqref{eqn:high-snr-var-p3} into \eqref{eqn:high-snr-var},
  we get 
  \begin{align}
    \Var^+ S_{\rho,k+1} = d (1-2\eta)^2 (d\theta)^{2k} + d \Var^+ S_{\rho,k}.\label{eqn:high-snr-var-rec}
  \end{align}
  Solving \eqref{eqn:high-snr-var-rec} with initial value $S_{\rho,0} = 4\eta(1-\eta)$,
  we get
  \begin{align}
    \Var^+ S_{\rho,k} &= 4\eta(1-\eta)d^k + \sum_{i\in [k]} d^{k-i} d (1-2\eta)^2 (d\theta)^{2i-2} \nonumber\\
    & = 4\eta(1-\eta)d^k + (1-2\eta)^2 d^k \frac{(d\theta^2)^k-1}{d\theta^2-1}.\label{eqn:high-snr-var-result}
  \end{align}
  Putting together \eqref{eqn:high-snr-expectation}\eqref{eqn:high-snr-var-result}, we get the desired result.
\end{proof}

\section{Uniqueness of BP fixed point}\label{app:uniqueness}

  

\begin{prop}\label{prop:uniqueness-bp-fixed-point}
  Fix $d$, $\delta$, and a (possibly trivial) BMS $W$. Recall definition~\eqref{eqn:bp-fixed-point} of the BP fixed point (the $P_\Delta$ definition) for BOTS $(d, \theta, W)$. 
  \begin{itemize}
    \item If $W$ is non-trivial ($P_e(W) < \frac 12$) and $C_1 < 1$ (where $C_1$ is defined in \eqref{eqn:high-snr-defn-C1}), there is exactly one BP fixed point.
    \item If $W$ is trivial and $d\theta^2 \le 1$, there is exactly one BP fixed point, which is trivial (the point distribution at $\Delta=\frac 12$).
    \item If $W$ is trivial and $C_1 < 1$, there are exactly two BP fixed points, one is trivial and the other is non-trivial.
  \end{itemize}

  The same (statements about number of fixed points) hold for BOTS $(\Po(d), \theta, W)$
  with $C_1$ replaced by $C_2$ (defined in \eqref{eqn:high-snr-defn-C2}).
\end{prop}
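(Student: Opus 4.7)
The plan is to combine a monotonicity argument in the degradation order, which produces extremal fixed points and sandwiches every fixed point between them, with the potential-function contraction already proved in Propositions~\ref{prop:reduce-to-contraction} and~\ref{prop:high-snr}. First I would verify that the BP operator $T$ defined by \eqref{eqn:bp-fixed-point} is monotone in $\le_{\deg}$: each ingredient of the update (sampling i.i.d.\ children, applying $F_\theta$ together with an independent $\BSC_\delta$ flip, independent convolution of LLRs over the children, and adding the survey LLR) preserves degradation. Starting from the two extremal initial conditions $\mu_0 = \delta_{\Delta=0}$ (root bit known) and $\tilde\mu_0 = \delta_{\Delta=1/2}$ (no information), monotonicity forces $\mu_k := T^k\mu_0$ to be decreasing and $\tilde\mu_k := T^k\tilde\mu_0$ to be increasing in $\le_{\deg}$, and by compactness of laws on $[0,\tfrac12]$ both converge to fixed points $\mu_\infty, \tilde\mu_\infty$. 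Since every fixed point $\mu$ satisfies $\tilde\mu_0 \le_{\deg} \mu \le_{\deg} \mu_0$, iteration yields the sandwich $\tilde\mu_\infty \le_{\deg} \mu \le_{\deg} \mu_\infty$, so counting fixed points reduces to identifying $\mu_\infty$ and $\tilde\mu_\infty$ and excluding any strictly intermediate fixed point.

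The first two bullets follow quickly from this setup. For the first bullet ($W$ non-trivial, $C_1 < 1$), Propositions~\ref{prop:reduce-to-contraction} and~\ref{prop:high-snr} directly give $\bE(\Delta_k - \tilde\Delta_k)^2 \to 0$ under the canonical coupling, so $\mu_\infty = \tilde\mu_\infty$ and the sandwich collapses to uniqueness. For the second bullet ($W$ trivial, $d\theta^2 \le 1$), $\tilde\mu_\infty$ is trivial because BP with trivial $W$ fixes $\delta_{\Delta=1/2}$, and $\mu_\infty$ is also trivial either by the $\chi^2$-capacity contraction $C_{\chi^2}(T\nu) \le d\theta^2\, C_{\chi^2}(\nu)$ available in the absence of survey (immediate for $d\theta^2 < 1$) or, at the boundary $d\theta^2 = 1$, by the Kesten--Stigum/EKPS non-reconstruction statement, also extractable from Proposition~\ref{prop:high-snr-chi2}.

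The third bullet ($W$ trivial, $C_1 < 1$, and hence $d\theta^2 > 1$) is the most substantive. Existence of the two fixed points is easy: $\tilde\mu_\infty$ is trivial while $\mu_\infty$ is non-trivial because Proposition~\ref{prop:high-snr-chi2} forces $C_{\chi^2}(\mu_\infty) \ge (d\theta^2-1)/((d-1)\theta^2) > 0$ (with the Poisson analog $(d\theta^2-1)/(d\theta^2)$). To rule out a third fixed point $\mu$ strictly between them, by the sandwich such a $\mu$ is non-trivial and satisfies $\mu \le_{\deg} \mu_\infty$; I would rerun the argument of Proposition~\ref{prop:high-snr} on the canonical coupling of $\mu$ and $\mu_\infty$ in place of $(\tilde R, R)$. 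Since both distributions are fixed by $T$, the quantity $\bE[\phi(R_\mu^+) - \phi(R_{\mu_\infty}^+)]$ is preserved under $T$, and any strict contraction factor forces it to vanish, after which Proposition~\ref{prop:reduce-to-contraction} yields $\mu = \mu_\infty$. The input needed to recycle the proof is a uniform $\chi^2$-capacity lower bound for any non-trivial fixed point: viewing such a $\mu$ as the BMS survey $W_\mu$ and applying Proposition~\ref{prop:high-snr-chi2} to the channel $M_k^5$ with $W = W_\mu$ yields $C_{\chi^2}(\mu) \ge (d\theta^2-1)/((d-1)\theta^2)$. The Poisson offspring case is completely parallel, with $C_2$ in place of $C_1$ and the Poisson form of the $\chi^2$ bound.

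The hard part is exactly this last transfer: the contraction of Proposition~\ref{prop:high-snr} was tailored to the sequence arising from the two extremal initial conditions, and extending it to an arbitrary pair of non-trivial fixed points requires both the uniform $C_{\chi^2}$ lower bound above and a careful check that the Jensen and concavity steps in the proof remain sharp when the less-informative side is itself a non-trivial fixed point rather than a ``without-leaves'' iterate.
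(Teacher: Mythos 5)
Your proposal is correct, but it takes a somewhat different route from the paper's proof, and it's worth comparing the two.

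The paper does not construct extremal fixed points by monotone iteration. Instead, given any two non-trivial fixed points $P_\Delta$ and $Q_\Delta$ (as BMS channels $P$, $Q$), it sets $r = \max\{P_e(P), P_e(Q)\}$ and uses $\BSC_r$ as an auxiliary channel: $\BSC_r$ is non-trivial and is \emph{more} degraded than both $P$ and $Q$. Then it considers the three BOT observation models with leaf observations $P$, $Q$, $\BSC_r$ respectively, runs the contraction of Prop.~\ref{prop:high-snr} on the pairs $(P,\BSC_r)$ and $(Q,\BSC_r)$ (each is a degraded pair with the required $\chi^2$-capacity bounds from Prop.~\ref{prop:high-snr-chi2}), and concludes that all three converge to the same BMS as $k\to\infty$. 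Since the fixed-point property forces the $P$-sequence to be constantly $P$ and the $Q$-sequence to be constantly $Q$, this gives $P = Q$. Your proof instead compares every fixed point $\mu$ against the maximal fixed point $\mu_\infty = \lim_k T^k\delta_{\Delta=0}$, using the sandwich $\tilde\mu_\infty \le_{\deg} \mu \le_{\deg} \mu_\infty$, and then runs the same contraction on the pair $(\mu, \mu_\infty)$. Both work for the same underlying reason --- the contraction in Prop.~\ref{prop:high-snr} is agnostic to which specific degraded pair is fed in, as long as both members have the requisite $\chi^2$-capacity lower bound --- and your device of viewing a non-trivial fixed point $\mu$ as the BMS ``leaf survey'' $W_\mu$ and applying Prop.~\ref{prop:high-snr-chi2} to $M^5_k$ is exactly the right way to supply that bound (it is the same trick the paper uses for $M^c_k$, via $M^1_k$ with $\eta = r$).

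What the paper's route buys is economy: by comparing any two fixed points directly through the explicit $\BSC_r$, it avoids having to establish monotonicity of the BP operator in the degradation order, weak compactness, convergence of the monotone iterates, and continuity of $T$ so that the limit is indeed a fixed point. All of these steps are standard but must be spelled out in your version. What your route buys is an explicit construction of the extremal (maximal) fixed point $\mu_\infty$, which makes the \emph{existence} of a non-trivial fixed point in the third bullet (trivial $W$, $d\theta^2 > 1$) transparent rather than implicit. One small caution: your concluding worry that ``the Jensen and concavity steps ... remain sharp'' when the more-degraded side is a non-trivial fixed point is actually unfounded --- those steps in Prop.~\ref{prop:high-snr}'s proof (concavity of $p$, the $G''\ge 0$ computation, $G'(\tfrac12)=0$) are purely structural and do not depend on which degraded pair is compared; the only substantive input you need is the $\chi^2$-capacity lower bound, which you correctly supply.
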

\begin{proof}
If $W$ is trivial and $d\theta^2 \le 1$, we are in the non-reconstruction regime and there is a unique BP fixed point, and it is trivial.

If $W$ is trivial, there is one trivial fixed point.
If $W$ is non-trivial, the trivial distribution is not a fixed point.
We prove that for any $(d, \theta, W)$ satisfying $C_1<1$ (or $C_2<1$ for Poisson trees), there is exactly one non-trivial fixed point.

Suppose there are two non-trivial fixed points $P_\Delta$ and $Q_\Delta$.
Let $P$ be a BMS corresponding to $P_\Delta$ and $Q$ be a BMS corresponding to $Q_\Delta$.
Let $r = \max\{P_e(P), P_e(Q)\}$. Then $\BSC_r$ is non-trivial and is more degraded than both $P$ and $Q$.

We consider a Broadcasting on Tree model with three different types of observations:
\begin{itemize}
\item $M^a_k$: Observe $P(\sigma_v)$ for all $v\in L_k$;
\item $M^b_k$: Observe $Q(\sigma_v)$ for all $v\in L_k$;
\item $M^c_k$: Observe $\BSC_r(\sigma_v)$ for all $v\in L_k$.
\end{itemize}

By the same proof as Theorem \ref{thm:uniqueness-survey},
in the limit $k\to \infty$, $M^a_k$ and $M^c_k$ converge to the same BMS;
the same holds for $M^b_k$ and $M^c_k$.
Therefore in the limit $k\to \infty$, $M^a_k$ and $M^b_k$ converge to the same BMS.

By the assumption that $P$ and $Q$ are BP fixed points,
$M^a_k$ are equivalent to $P$ for all $k$,
and $M^b_k$ are equivalent to $Q$ for all $k$.
So $P$ and $Q$ are equivalent BMSs.
This means $P_\Delta = Q_\Delta$.
\end{proof}

\section{Rough estimate of $C$ in \cite{Mossel_2016}}\label{apx:mns_const}

As we mentioned, ~\cite{Mossel_2016} proves uniqueness of BP fixed point for BOT (without survey) and $d\theta^2 > C$ for an unspecified $C$. Can we extract explicit $C$ from their work? First, we point out that taken literally, the proof demands at least $C>75$. Second, we (heuristically!) argue below that it may be difficult to reduce $C$ below 25 without significant modifications of the proof. We remark that this section is not meant to be rigorous and it may very well be that the method therein can be tweaked in ways we did not consider. 

The proof in question is divided into the large $\theta$ case and the small $\theta$ case.
First, they prove that there exists a $\theta^*>0$ so that for $\theta \le \theta^*$, uniqueness of BP fixed point holds for large enough $d \theta^2$.
Then they prove that for $\theta > \theta^*$, there exists $d$ large enough so that uniqueness of BP holds.
We focus on the small $\theta$ part and analyze their proof for $\theta$ close to $0$.

In \cite[middle of page 2230]{Mossel_2016} authors require  $d\theta^2$ larger than about $75$. Let us analyze how much improvement is possible. In the following, equation and lemmas refer to the cited paper.
\begin{itemize}
\item In Lemma 3.6, it is impossible to achieve an RHS better than $1-\frac{1-\theta^2}{d\theta^2}$ by using a majority estimator (which is used by both their paper and the current paper).
\item In (3.8), they applied Lemma 3.9 with $p=\frac 14$. Changing this exponent would result in a big change in the proof, so we leave it as-is.
\item In Lemma 3.10, by Taylor expansion, it is impossible to improve RHS to $d m^{d-1} (\bE A^2 - \bE B^2)^2$.
\item In Lemma 3.11, by Taylor expansion $$\sqrt{\frac{1-x}{1+x}} = 1-x+\frac{x^2}2-\frac{x^3}2+O(x^4),$$ their proof cannot give a RHS better than
$1 - \theta^2 x_k + \frac 12 \theta^2$.
Combined with Lemma 3.6, their proof does not give a RHS better than 
$1 - \theta^2 (1-\frac{1-\theta^2}{d\theta^2}) + \frac 12 \theta^2$.
\item In Lemma 3.12, RHS cannot be better than $2\theta^2$, because this is less than $ |\frac{d}{dx} \frac{1-\theta^2x}{\sqrt{1-\theta^2x^2}}|$ at $x=-1$.
\item Consequently, in Lemma 3.13, the leading factor in RHS cannot be better than $2\theta^2$.
\item In (3.12), RHS cannot be better than $64d^2m^{d-2}(a-b)^2$ by using (3.8) with $p=\frac 14$ and Lemma 3.10.
\item Combining the above, in the expression in the middle of Page 2230, RHS cannot be better than
$$64((2\theta^2)^2 d^2 (1 - \theta^2 (1-\frac{1-\theta^2}{d\theta^2}) + \frac 12 \theta^2)^{d-2})z.$$
Computation shows that, for the factor before $z$ to be smaller than $1$, we need at least $d\theta^2 \ge 26$ in the limit $\theta \to 0$.
\end{itemize}

\section{Weak spatial mixing}\label{app:wsm}
In Section \ref{sec:recon-on-tree}, we studied whether BP message (with recursion \eqref{eqn:llr-recursion}) converges to the same value under perfect observation or no observation of leaves.
A related question is weak spatial mixing, i.e., whether BP message converges to the same value under any observation of leaves.

Fix $k\ge 0$.
Let $R_{L_k,0}$ and $R'_{L_k,0}$ be two boundary conditions.
Define $R_{\rho,k}$ (resp.~$R'_{\rho,k}$) by using \eqref{eqn:llr-recursion} recursively,
with initial condition $R_{L_k,0}$ (resp.~$R'_{L_k,0}$).
We say the model has weak spatial mixing if
\begin{align}
  \lim_{k\to \infty} \bE_{T, \omega_{T_k}} \sup_{R_{L_k,0}, R'_{L_k,0}} |f(R_{\rho,k})-  f(R'_{\rho,k})| = 0
\end{align}
for all bounded continuous functions $f : \mR\cup\{\pm \infty\} \to \mR$.

In the following we focus on regular trees. It is known \cite{bleher1995purity} that in the case there is no survey, $d\theta = 1$ is the threshold for WSM, i.e.,
when $d\theta < 1$, WSM holds; when $d\theta > 1$, WSM does not hold.
The following result shows that for WSM with survey, this is still almost the case.
\begin{thm}\label{thm:wsm}
\begin{itemize}
    \item For $d\theta < 1$ and any survey, WSM holds.
    \item For $d\theta > 1$, there exists $\epsilon = \epsilon(d, \theta)>0$ such that for $\BSC$ survey with $P_e(W) > \frac 12 -\epsilon$, WSM does not hold.
\end{itemize}
\end{thm}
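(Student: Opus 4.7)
The theorem has two parts that require quite different ideas.

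For part 1 ($d\theta<1$), the plan is to exploit the global contractivity $|F'_\theta(r)|\le\theta$, coming directly from $F_\theta(r)=2\arctanh(\theta\tanh(r/2))$. The key observation is that the survey term $W_v$ appears identically in the recursions for $R_{\rho,k}$ and $R'_{\rho,k}$ and therefore cancels, giving the deterministic recursion
\begin{align}
R_{u,k}-R'_{u,k}=\sum_{v\in L_1(u)}\bigl[F_\theta(R_{v,k-1})-F_\theta(R'_{v,k-1})\bigr].
\end{align}
Since $|F_\theta(r)|\le 2\arctanh\theta$ for every $r\in\mR\cup\{\pm\infty\}$, after one step up from the leaves both BP messages are confined to a bounded interval, and therefore $|R_{v,1}-R'_{v,1}|\le 4d\arctanh\theta$ uniformly in the boundary and the survey. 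Iterating the Lipschitz bound $k-1$ more times gives $|R_{\rho,k}-R'_{\rho,k}|\le(d\theta)^{k-1}\cdot 4d\arctanh\theta$, which vanishes as $k\to\infty$ whenever $d\theta<1$ (note that if $W_\rho=\pm\infty$ both messages equal the same infinity and the difference of $f$ values is zero). Since $\mR\cup\{\pm\infty\}$ is compact and $f$ is continuous, $f$ is uniformly continuous, so $|f(R_{\rho,k})-f(R'_{\rho,k})|\to 0$ uniformly in boundary and survey; bounded convergence concludes.

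For part 2, I would specialize to the boundary conditions $R_{L_k,0}\equiv+\infty$ and $R'_{L_k,0}\equiv-\infty$. Spin-flip symmetry of the model gives that $R^+_{\rho,k}$ and $-R^-_{\rho,k}$ have the same law, so it suffices to exhibit a bounded continuous odd $f$ for which $\bE[f(R^+_{\rho,k})]$ is bounded away from $0$ uniformly in $k$; I would take $f(r)=\tanh(r/2)$ (the root magnetization). In the trivial-survey limit $P_e(W)=\tfrac12$ the iteration becomes deterministic: $x_0=+\infty$, $x_{k+1}=dF_\theta(x_k)$. Since $g(x):=dF_\theta(x)-x$ satisfies $g'(0)=d\theta-1>0$ and $g(+\infty)=-\infty$, concavity of $F_\theta$ on $(0,\infty)$ gives a unique positive zero $x^*>0$, and monotonicity of $F_\theta$ combined with $g(x_k)<0$ for $x_k>x^*$ shows $x_k\downarrow x^*$. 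Hence the no-survey magnetization converges to $m^*:=\tanh(x^*/2)>0$.

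For a weak $\BSC_{1/2-\epsilon}$ survey, my plan is to couple the two iterations via $Y_{v,k}:=R^+_{v,k}-x_k$ and study
\begin{align}
Y_{v,k+1}=\sum_{u\in L_1(v)}\bigl[F_\theta(x_k+Y_{u,k})-F_\theta(x_k)\bigr]+W_v,
\end{align}
where $|W_v|=\log\tfrac{1/2+\epsilon}{1/2-\epsilon}=O(\epsilon)$ almost surely. The no-survey trajectory lives in $[x^*,+\infty]$, a region where $F'_\theta$ is much smaller than $\theta$; indeed $dF'_\theta(x^*)<1$ because $x^*$ lies on the decreasing part of $g$. A Lyapunov-type induction controlling $\bE|Y_{v,k}|$, combined with Markov-type tail bounds preventing $R^+_{v,k}$ from excursing into the non-contractive region near $r=0$, should then yield $\bE|Y_{v,k}|=O(\epsilon)$ uniformly in $k$. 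Consequently $\bE[\tanh(R^+_{\rho,k}/2)]\to m^*$ as $\epsilon\to 0$ uniformly in $k$, so $\bE[f(R^+_{\rho,k})]\ge m^*/2>0$ for all $k$ once $\epsilon$ is small enough, exhibiting failure of WSM. The main obstacle is precisely this uniform-in-$k$ perturbation bound: the global Lipschitz constant $\theta$ is useless since $d\theta>1$, and one must carefully exploit the sharper local contractivity $dF'_\theta(x^*)<1$, which requires a two-scale argument showing that stochastic excursions into the unstable neighborhood of $r=0$ remain unlikely along the entire trajectory.
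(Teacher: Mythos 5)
Part~1 is essentially the paper's argument and is correct: the survey cancels in the difference, $F_\theta$ is $\theta$-Lipschitz, and after one application of $F_\theta$ the messages are confined to $[-2\arctanh\theta,2\arctanh\theta]$, giving a deterministic geometric bound $|R_{\rho,k}-R'_{\rho,k}|\le(d\theta)^{k-1}\cdot 4d\arctanh\theta$. Your version is in fact slightly cleaner than the paper's (which states the bound in expectation and is a bit loose about the first step, where the raw difference at the boundary may be infinite).

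Part~2, however, both deviates from the paper and contains the gap you yourself flag. You aim at the right conclusion (separate the $(+)$- and $(-)$-boundary trajectories), but you set up a perturbative analysis around the survey-free orbit $x_k\downarrow x^*$ and then need a uniform-in-$k$ control of $\bE|R^+_{v,k}-x_k|$, which requires ruling out excursions into the expansive region near~$r=0$ where $dF_\theta'(0)=d\theta>1$. This ``two-scale'' step is genuinely hard and is exactly what you have not supplied; as it stands, the proposal does not prove part~2. The paper sidesteps all of this with a simple almost-sure barrier (monotone invariance) argument that exploits a special feature of $\BSC$ survey: $|W_u|\le\log\frac{1-\eta}{\eta}$ deterministically. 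Since $d\theta>1$, there exists $x>0$ with $dF_\theta(x)>x$; for $\eta$ close enough to $\tfrac12$ one has $dF_\theta(x)-\log\frac{1-\eta}{\eta}>x$, and then by monotonicity of $F_\theta$ the set $\{R_{u,\cdot}>x\text{ for all }u\}$ is invariant under the recursion starting from the $(+)$-boundary. Symmetrically $R'_{u,\cdot}<-x$ for the $(-)$-boundary, so $|R_{\rho,k}-R'_{\rho,k}|\ge 2x$ for every $k$, surely, and taking $f(r)=\tanh(r/2)$ shows WSM fails. No probabilistic control of the trajectory is needed at all. If you wish to salvage your approach, notice that the deterministic barrier already provides the ``excursions are impossible'' ingredient for free, at which point the perturbative machinery is no longer necessary.
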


\begin{proof}
  For $d\theta < 1$:
  For any node $u$, We have
  \begin{align}
    \bE |R_{u,k+1} - R'_{u,k+1}| &=  \bE |\sum_{v\in L_1(u)} (F_\theta(R_{v,k}) - F_\theta(R'_{v,k}))|\\
    &\le d\theta \bE |R_{v_1,k} - R'_{v_1,k}|.
  \end{align}
  (We use the fact that $F_\theta$ is $\theta$-Lipschitz.)
  So
  \begin{align}
    \bE |R_{\rho,k+1} - R'_{\rho,k+1}| \le d\theta \bE |R_{\rho,k} - R'_{\rho,k}|
  \end{align}
  and we get the desired contraction.
  
  For $d\theta > 1$:
  We separate the limit BP distribution for $(+)$-boundary condition and $(-)$-boundary condition.
  Because $d\theta > 1$, there exists $x > 0$ such that $d F_\theta(x) > x$.
  Let $\epsilon$ be small enough so that for all $\eta$ with $P_e(\BSC_\eta) > \frac 12-\epsilon$, we have
  \begin{align}
    d F_\theta(x) - \log \frac{1-\eta}{\eta} > x.
  \end{align}
  In this case, we can prove by induction that if we start with the $(+)$-boundary condition,
  then $R_{u,k} > x$ for all $u$ and $k$.
  By symmetry, if we start with the $(-)$-boundary condition, then $R_{u,k} < -x$ for all $u$ and $k$.
  So we get the desired separation.
\end{proof}

Note that for the case $d\theta>1$ we only prove for $\BSC$ survey.
Numerical computation suggests that the result should hold for any BMS survey with sufficiently large $P_e$. Thus we make the following conjecture.
\begin{conj}
  For $d\theta > 1$, there exists $\epsilon = \epsilon(d, \theta)>0$ such that for any BMS survey $W$ with $P_e(W) > \frac 12-\epsilon$, WSM does not hold.
\end{conj}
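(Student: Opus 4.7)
Plan: The argument of Theorem \ref{thm:wsm} for BSC survey relies crucially on $|W_u|$ being uniformly bounded by $\log\frac{1-\eta}{\eta}$, which can be made arbitrarily small as $P_e(W)\to\frac 12$. This yields the deterministic induction $R_{u,k} > x > -x > R'_{u,k}$. A general BMS, however, has unbounded LLR (e.g., BEC produces $\pm\infty$ on revealed nodes), so a probabilistic version is needed.

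Setup. Fix $x>0$ with $dF_\theta(x)>x$, which exists since $d\theta>1$, and let $\eta_0 := dF_\theta(x)-x$. Put $W$ in standard form with $\Delta$-component $\Delta_u$, so that $|W_u| = \log\frac{1-\Delta_u}{\Delta_u}$. Choose $\Delta_0\in(0,\tfrac 12)$ so that $\Delta_u>\Delta_0$ implies $|W_u|<\eta_0/2$; call such nodes \emph{weak}. Since $P_e(W)=\bE\Delta_u > \tfrac 12-\epsilon$, Markov's inequality gives $\bP[u \text{ weak}] \ge 1 - \epsilon/(\tfrac 12-\Delta_0)$, which tends to $1$ as $\epsilon\to 0$.

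Strategy. The subtree of weak nodes is a Galton-Watson process with $\mathrm{Bin}(d, p_{\text{weak}})$ offspring. For $\epsilon$ small enough, $d\cdot p_{\text{weak}}>1$ (using that $d\theta>1$ forces $d\ge 2$), so this subtree is supercritical and survives to depth $k$ with probability bounded below uniformly in $k$. One would like, on this event, to show $R_{\rho,k}>x$ under $(+)$-boundary (and $R'_{\rho,k}<-x$ under $(-)$-boundary by symmetry), giving WSM failure via the monotonicity $R_{u,k}\ge R'_{u,k}$ of the BP recursion.

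Main obstacle. At each weak node, BP sums contributions from all $d$ children, including possible \emph{strong} ones ($\Delta_v\le\Delta_0$). A strong child's value $R_{v,k}$ can be as negative as $-\log\frac{1+\theta}{1-\theta}$, so $F_\theta(R_{v,k})$ may drag the parent below $x$. For large $d$ the positive contributions from weak children dominate, but for $d=2$ a single strong child can break the one-step induction. Overcoming this will require either (a) treating the strong-child contributions as a zero-mean noise (BMS symmetry gives $\bE W_u=0$) and applying a concentration/martingale bound across many levels of the subtree, or (b) a stochastic domination in the degradation order relating the $(+)$-boundary channel $\sigma_\rho\to(\omega_{T_k}, R_{L_k,0})$ to a no-survey BOT on the weak subtree, where WSM failure at $d\theta>1$ is classical \cite{bleher1995purity}. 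I expect this comparison step to be the crux---and likely the very reason the authors left the BEC case open.
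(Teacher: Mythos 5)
This statement is Conjecture~2 in the paper, and the paper deliberately leaves it \emph{unproved}: Theorem~\ref{thm:wsm} establishes WSM failure only for $\BSC$ survey, and the text immediately following it says ``for the case $d\theta>1$ we only prove for $\BSC$ survey. Numerical computation suggests that the result should hold for any BMS survey...''. So there is no proof in the paper for you to have matched or missed, and your write-up is correct to stop short of claiming one.

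Your diagnosis of why the $\BSC$ argument does not generalize is accurate and matches the structural reason the authors stopped at $\BSC$. The $\BSC_\eta$ proof exploits the deterministic, uniform bound $|W_u| = \log\frac{1-\eta}{\eta}$, which can be made smaller than $dF_\theta(x)-x$; this makes the pointwise induction $R_{u,k}>x$ close on every node at once. A general BMS with $P_e(W)$ close to $\frac 12$ still has $\bE\Delta_W$ close to $\frac 12$, but its LLR need not be bounded (the paper is most interested in $\BEC_\epsilon$, where a revealed node has $|W_u|=\infty$), so no fixed $x$ survives a single bad child. Your weak/strong split via Markov's inequality and the supercritical Galton--Watson subtree of weak nodes is the natural next move, and you correctly identify that it fails as a one-step induction once $d$ is small: a single strong child contributes $F_\theta(R_{v,k})$ as negative as $-\log\frac{1+\theta}{1-\theta}$, which can overwhelm the positive contributions of the weak children. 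Neither of your two proposed patches (a martingale/concentration estimate using $\bE W_u = 0$, or a degradation comparison to no-survey BOT on the weak subtree) is carried out, and both are genuinely nontrivial --- for instance the degradation comparison has to cope with the fact that the strong children reveal information pointing the \emph{wrong way} relative to the forced $(\pm)$-boundary, so the restriction to the weak subtree is not simply a degradation of the full channel. In short: you have correctly located the obstruction and correctly refrained from declaring victory; the statement remains open in the paper exactly where your proposal stops.

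Two small remarks. First, your Markov step is stated slightly loosely: you want $\bP[\Delta_u<\Delta_0]\le \epsilon/(\tfrac 12-\Delta_0)$ from $\bE[\tfrac 12-\Delta_u]<\epsilon$ and nonnegativity of $\tfrac 12-\Delta_u$; the conclusion you wrote is the complement of this and is fine, but it is worth being explicit that the bound is on the strong fraction. Second, you implicitly use the BP monotonicity $R_{u,k}\ge R'_{u,k}$ under pointwise-ordered boundary conditions; this is standard (it follows from $F_\theta$ being increasing), but it is the property that reduces the problem to separating the two extreme boundaries, and it deserves a sentence if you pursue this further.
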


\section{Amenable graphs}  \label{app:amenable}
Recall definition of the spin synchronization system and the (BI) property given in~\eqref{eq:BI-SOG}.

\begin{defin}[Amenable graph (\cite{alaoui2019computational})]
A graph $G$ is said to be amenable if $\inf \{|\partial S| /|S|: S \subset V \text { finite, } o \in S\}=0$, where $ \partial S=\{u \in S: \exists v \notin S,(u, v) \in E\}$.
\end{defin}
\begin{thm} \label{thm:amenable}
Let $G$ be an amenable graph. For any $\epsilon \in [0,1)$, the (BI) holds for $(G,o,\theta,\BEC_\epsilon)$.
\end{thm}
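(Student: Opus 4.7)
The plan is to combine the area-theorem / interpolation identity used throughout this paper with the Følner characterisation of amenability. For any finite $S \ni o$, differentiating $H(X_S \mid Y_{E(S)}, \omega_S^\epsilon)$ in $\epsilon$ exactly as in Section~\ref{sec:bi-and-sbm} gives
\begin{align*}
H(X_S \mid Y_{E(S)}, \omega_S^\epsilon) \;=\; \int_0^\epsilon \sum_{u \in S} H(X_u \mid Y_{E(S)}, \omega_{S \setminus u}^{\epsilon'})\, d\epsilon',
\end{align*}
and the parallel identity with the inner-boundary spins $X_{\partial S}$ additionally pinned. Subtracting the two, and using that the left-hand entropies differ by at most $|\partial S|\log 2$, one obtains
\begin{align*}
\int_0^\epsilon \frac{1}{|S|} \sum_{u \in S \setminus \partial S} I\bigl(X_u;\, X_{\partial S} \,\big|\, Y_{E(S)}, \omega_{S \setminus u}^{\epsilon'}\bigr)\, d\epsilon' \;=\; O\!\left(\frac{|\partial S|}{|S|}\right).
\end{align*}

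Next I would evaluate this identity along a Følner sequence $S_k \supset \{o\}$ with $|\partial S_k|/|S_k| \to 0$. Such a sequence exists because any Følner sequence can be enlarged by $B_r(o)$ without changing its Følner property, by amenability. Non-negativity of the integrand, combined with Fubini, then gives that for a.e.\ $\epsilon \in [0,1)$ the per-vertex average $\frac{1}{|S_k|} \sum_{u \in S_k\setminus \partial S_k} I(X_u;\, X_{\partial S_k} \mid Y_{E(S_k)}, \omega_{S_k \setminus u}^{\epsilon})$ tends to $0$ as $k\to \infty$. This is the averaged boundary-irrelevance statement.

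The last and most delicate step is to upgrade this averaged convergence into the pointwise statement~\eqref{eq:BI-SOG} at the fixed vertex $o$ and along the specific exhaustion by balls $B_n(o)$. For this one invokes the results of~\cite{alaoui2019computational} on Bethe free energies and posterior concentration for random fields on amenable graphs: whenever the $\BEC_\epsilon$ survey is non-trivial ($\epsilon<1$), a positive density of spins is pinned, which breaks the $\pm$ symmetry locally and provides the uniform concentration of single-vertex posteriors needed to turn the averaged vanishing into a vertex-wise vanishing, in particular at $u=o$. Combined with the monotonicity of $n \mapsto H(X_o \mid Y_{B_n(o)}, \omega_{B_n(o)}^\epsilon)$ and the sandwich $H(X_o \mid Y_{B_n(o)}, \omega_{B_n(o)}^\epsilon) \ge H(X_o \mid Y_{B_n(o)}, \omega_{B_n(o)}^\epsilon, X_{\partial B_n(o)})$, this yields (BI) for every $\epsilon<1$.

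The main obstacle is precisely the passage from an averaged statement over vertices of a Følner set to a pointwise statement at the specified root $o$ in a graph that need not be transitive; amenability alone gives only an ``almost every vertex'' conclusion, and the uniform-concentration input supplied by~\cite{alaoui2019computational}, which crucially uses that the BEC survey with $\epsilon<1$ breaks the global $\pm$ symmetry, is what closes this gap.
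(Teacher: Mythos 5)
Your proposal follows the same backbone as the paper: the interpolation (area-theorem) identity expressing the boundary mutual information as a derivative of a surveyed entropy, integration over $\epsilon$, and an upper bound by $\log 2\,|\partial S|$ so that amenability kills the per-vertex average. Up to the choice of exhausting sets (you use a F{\o}lner sequence, the paper works directly with balls $B_n(o)$) the two derivations of the \emph{averaged} boundary-irrelevance statement are the same.

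The divergence, and the genuine gap, is in the final ``averaged $\Rightarrow$ pointwise at $o$'' step, which you correctly flag as the delicate part. Two issues. First, your Fubini step only yields the averaged vanishing for \emph{almost every} $\epsilon\in[0,1)$, whereas the theorem asserts it for every $\epsilon\in[0,1)$; to close this you still need a monotonicity or continuity argument in $\epsilon$ (the paper invokes bounded convergence, which, as stated, itself deserves a word). Second, and more importantly, the mechanism you invoke---``posterior concentration'' and ``local breaking of the $\pm$ symmetry by pinned spins,'' attributed loosely to \cite{alaoui2019computational}---is not the one the paper uses and is not clearly sufficient: the averaged vanishing over a F{\o}lner set says nothing by itself about the single site $o$ in a non-transitive graph, and no concentration result is stated that would single out $o$. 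The paper instead closes the gap with a concrete conditional-independence (data-processing) inequality, namely that for a suitable $k$ one has
\begin{align}
I\bigl(X_o; X_{\partial B_{kn}(o)} \,\big|\, Y, X^\epsilon\bigr) \;\le\; \frac{1}{|B_n(o)|}\sum_{u\in B_n(o)} I\bigl(X_u; X_{\partial B_n(o)} \,\big|\, Y, X^\epsilon\bigr),
\end{align}
which uses the fact that $\partial B_n(o)$ together with the $Y$'s and surveys inside $B_n(o)$ screens each interior vertex $u$ from everything outside; this is the content of \cite[Lemma 6.3]{alaoui2019computational}. Without something of this Markov-chain flavour your argument does not reach the conclusion at the specified root $o$, so this step needs to be replaced.
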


\begin{proof} 
A consistent part of this proof is inspired by Lemma 6.3 in \cite{alaoui2019computational}. We reproduce it for a self-contained exposure. As in the proof of Theorem \ref{thm:HSBM-BOT}, let us replace the single parameter $\epsilon$ by a set of parameters $(\epsilon_u)_{u \in V(G)}$ (for each vertex $u$, $X_u$ is revealed with probability $1-\epsilon_u$), and let us denote $X_{\sim u}^\epsilon = \{ X_v^\epsilon: v \in V(G), v\neq u \}$. For brevity, we write $B_n, \partial B_n $ for $B_n(o), \partial B_n(o) $. Then,
\begin{align*}
    \frac{\partial }{\partial \epsilon_u} H( X_{\partial B_n} \mid Y,X^\epsilon) = I(X_u; X_{\partial B_n} \mid Y,X_{\sim u}^\epsilon),
\end{align*}
and setting $\epsilon_u=\epsilon$ for every $u \in B_n$ we get 
\begin{align}
    \frac{d}{d\epsilon }H( X_{\partial B_n} \mid Y,X^\epsilon) = \sum_{u \in B_n} I (X_u; X_{\partial B_n} \mid Y,X_{\sim u}^\epsilon).
\end{align} 
Thus, integrating with respect to $\epsilon$ we get
\begin{align}
    \int_{\epsilon}^1 \sum_{u \in B_n} I (X_u; X_{\partial B_n} \mid Y,X_{\sim u}^{\epsilon'}) d\epsilon '& = H(X_{\partial B_n} \mid Y) -  H(X_{\partial B_n} \mid Y , X^{\epsilon})\\
    & \leq H( X_{\partial B_n} ) \\
    & \leq \sum_{u \in \partial B_n}  H(X_u) \\
    & = \log{2}|\partial B_n|.
\end{align}
If we divide by $|B_n|$, we get that for all $\epsilon<1$
\begin{align}
\int_{\epsilon}^{1} \frac{1}{|B_n|} \sum_{u \in B_n}  I\left(X_{u} ; X_{\partial B_n} \mid Y_{B_n}, X_{B_n}^{\epsilon'} \right) \mathrm{d} \epsilon^{\prime} \leq \log{2} \cdot \frac{|\partial B_n|}{|B_n|}.
\end{align}
Since $G$ is amenable, the RHS is vanishing as $n \to \infty$. Note that the integrand in the LHS is bounded by $\log{2}$, hence by bounded convergence theorem, we get that for all $\epsilon \in [0,1)$
\begin{align}
\lim_{n \to \infty} \frac{1}{|B_n|} \sum_{u \in B_n}  I\left(X_{u} ; X_{\partial B_n} \mid Y_{B_n}, X_{B_n}^{\epsilon} \right)  = 0.
\end{align}
To conclude, notice that there exists $k \in \mathbb{N}$ such that 
\begin{align}
I(X_o; X_{\partial B_{k \cdot n}(o)} | Y, X^\epsilon) 
&\leq \frac{1}{|B_n(o)|} \sum_{u \in B_n(o)}  I (X_{u} ; X_{\partial B_n(o)} \mid Y, X^{\epsilon} ).
\end{align}
\end{proof}

\end{document}